\newtheorem{thm}{Theorem}
\newtheorem{lem}[thm]{Lemma}
\newtheorem{cor}[thm]{Corollary}
\newtheorem{problem}[thm]{Problem}
\theoremstyle{remark}
\newtheorem*{rem}{Remark}
\newcommand*{\abs}[1]{\lvert#1\rvert}
\newcommand*{\floor}[1]{\lfloor#1\rfloor}
\newcommand*{\pc}[1][c]{\dot p_{\mathrm #1}}
\newcommand{\pairs}{\mathfrak O}
\newcommand{\vecom}{\boldsymbol\omega}
\newcommand*{\prob}[2][p]{\mathbb P_{#1}(#2)}
\newcommand*{\mean}[2][p]{\mathbb E_{#1}(#2)}
\newcommand*{\bigbr}[1]{\bigl(#1\bigr)}
\newcommand*{\biggerbr}[1]{\biggl(#1\biggr)}
\newcommand*{\bigmean}[2][p]{\mathbb E_{#1}\bigbr{#2}}
\newcommand*{\theorem}[1]{Theorem~\ref{#1}}
\newcommand*{\lemma}[1]{Lemma~\ref{#1}}
\newcommand*{\corollary}[1]{Corollary~\ref{#1}}
\newcommand{\ie}{i.e.\ }
\newcommand{\eg}{e.g.\ }
\newcommand*{\bound}[1][n]{\floor{#1^2/12-#1/2+1}}
\title{Site percolation and isoperimetric inequalities for plane graphs}
\author{John Haslegrave}
\author{Christoforos Panagiotis}
\affil{{Mathematics Institute}\\
	{University of Warwick}\\
	{CV4 7AL, UK}}
\begin{document}
\maketitle

\begin{abstract}We use isoperimetric inequalities combined with a new technique to prove upper bounds for the site percolation threshold of plane graphs
with given minimum degree conditions. In the process we prove tight new isoperimetric bounds for certain classes of hyperbolic graphs. 
This establishes the vertex isoperimetric constant for all triangular and square hyperbolic lattices, answering a question of Lyons and Peres. 

We prove that plane graphs of minimum degree at least $7$ have site percolation threshold bounded away from $1/2$, which was conjectured by Benjamini and Schramm, 
and make progress on a conjecture of Angel, Benjamini and Horesh that the critical probability
is at most $1/2$ for plane triangulations of minimum degree $6$. We prove additional bounds for stronger minimum degree conditions, and for graphs without triangular faces.
\end{abstract}

\section{Introduction}
In their highly influential paper \cite{beyond}, Benjamini and Schramm made several conjectures that generated a lot of interest among mathematicians and led to many beautiful mathematical results \cite{cutsets,NoPerco,PercoHyperbolic,PhaseTransitionGroups,puMonotonicity,
NoPercoExp}, just to name a few. Despite the substantial amount of work, most of these conjectures are still open, while for a few of them, hardly anything is known. One of their conjectures states that $\pc(G)<1/2$ on any planar graph $G$ of minimal degree at least $7$; they additionally conjecture that there are infinitely many infinite open clusters on the interval $(\pc(G),1-\pc(G))$. As Benjamini and Schramm observe in their paper, every planar graph of minimal degree at least $7$ is non-amenable. The conjecture has been verified for the $d$-regular triangulations of the hyperbolic plane in \cite{PercoHyperbolic}.

The connection between percolation thresholds and isoperimetric (or Cheeger) constants is well known, and in \cite{beyond} it is proved that the site percolation threshold
for a graph $G$ is bounded above by $(1+\dot{h}(G))^{-1}$, where $\dot{h}(G)$ is the vertex isoperimetric constant. In their book \cite{LyonsBook}, Lyons and Peres
give the edge isoperimetric constants for the regular hyperbolic tessellations $H_{d,d'}$, where $(d-2)(d'-2)>4$ (which were established by H\"{a}ggstr\"{o}m, Jonasson and Lyons \cite{HJL}), and ask \cite[Question~6.20]{LyonsBook} for the corresponding vertex isoperimetric constants.

Angel, Benjamini and Horesh considered isoperimetric inequalities for plane triangulations of minimum degree $6$ in \cite{ABH}, and proved a discrete analogue 
of Weil's theorem, showing that any such triangulation satisfies the same isoperimetric inequality as the Euclidean triangular lattice $\mathbb T_6$. 
They conjectured that $\mathbb T_6$ is extremal in other ways which might be expected to have connections with isoperimetric properties. 
First, they conjecture that the connective constant $\mu(T)$ -- that is, the exponential growth rate of the number of self-avoiding walks 
of length $n$ on $T$ -- is minimised by $\mathbb T_6$ among triangulations of minimum degree at least $6$. Secondly, they conjecture that percolation is hardest to achieve 
on $\mathbb T_6$ in the sense that both the critical probability for bond percolation $p_{\mathrm c}(T)$ and the critical probability 
for site percolation $\pc(T)$ are maximised by $\mathbb T_6$. Intuitively these conjectures are connected, in that if fewer 
long self-avoiding paths exist then long connections might be expected to be less robust, making percolation less likely to occur. See also \cite{Uniformization} for several other conjectures regarding planar triangulations. 

Georgakopoulos and Panagiotis \cite{analyticity} proved that $p_{\mathrm c}(T)\leq 1/2$ for any planar triangulation $T$ with minimum degree at least $6$, and a well-known result of Grimmett and Stacey \cite{Grimmett} shows that $\pc(T)\leq 1-(1/2)^d$ when the degrees in $T$ are bounded by $d$. The latter bound can be improved to $\pc(T)\leq 1-\frac{1}{d-1}$ \cite{analyticity} but in both cases, the bounds converge to $1$ as the maximal degree converges to infinity. We remark that Benjamini and Schramm \cite{beyond} made an even stronger conjecture than that one of Angel, Benjamini and Horesh mentioned above, namely that $\pc(T)\leq 1/2$ for any planar triangulation without logarithmic cut sets. 

In section \ref{triang} we consider the conjecture of Angel, Benjamini and Horesh for site percolation, and we prove the following theorem.
\begin{thm}\label{first}For any plane graph $G$ of minimum degree at least $6$,\[\pc(G)\leq2/3\,.\]\end{thm}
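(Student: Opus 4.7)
My plan is to combine the Angel--Benjamini--Horesh (ABH) isoperimetric inequality for plane graphs of minimum degree~$6$ with a Peierls-type argument that exploits the sublinear (quadratic) strength of the bound. The ABH inequality gives $|S|\le\bound$ for any finite vertex set $S$ with $n=|\partial S|$. The Benjamini--Schramm estimate $\pc(G)\le(1+\dot h(G))^{-1}$ is vacuous here, since $\mathbb T_6$ is amenable and so $\dot h(G)=0$ is possible in the class under consideration; the quadratic information in ABH must therefore be used more directly than through the vertex isoperimetric constant alone.

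The starting point is the identity
$$\mathbb P_p(o\in C_\infty)=p-\sum_{S\ni o,\,|S|<\infty}p^{|S|}(1-p)^{|\partial S|},$$
so it suffices to show that at $p=2/3$ the sum on the right is strictly less than $p$. I would organise the sum by $n=|\partial S|$ (using $|S|\le\bound$) and enumerate the finite clusters contributing to each value of $n$. The challenge is that the naive self-avoiding walk enumeration of boundary contours gives about $\mu^n$ configurations of length $n$, and since the connective constant $\mu$ exceeds $3$ on $\mathbb T_6$, the product $\mu^n(1-p)^n$ already diverges at $p=2/3$. A more careful enumeration exploiting planarity is therefore needed, and this is presumably where the ``new technique'' advertised in the abstract enters.

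A natural route is to note that the boundary of a finite cluster traces a cycle in the matching lattice of $G$, and given this cycle the interior is constrained by ABH to have size at most roughly $n^2/12$. Parameterising clusters by such encodings, and offsetting the combinatorial count against the interior weight $p^{|S|}$ (rather than ignoring it as in the crudest Peierls bound), should yield a sum bounded strictly below $p$. The argument would be cleanest for plane triangulations, where the matching lattice coincides with $G$ itself and closed boundary cycles have a clean combinatorial description; the extension to general plane graphs of minimum degree~$6$ with larger faces cannot go by simply adding edges to triangulate (this moves the threshold the wrong way) but should be handled by applying the same framework directly, since ABH holds in the stated generality.

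The main obstacle I anticipate is precisely the enumeration step: matching the combinatorial count of boundary structures against the exponential weight $(1-p)^n$ at $p=2/3$ tightly enough that the quadratic ABH constraint suffices to close the Peierls sum. This delicate interplay between the planar structure and the isoperimetric inequality, absent in classical bounds such as Grimmett--Stacey, is what I expect to carry the proof; getting the threshold exactly down to $2/3$ rather than some larger constant will depend on squeezing every bit of efficiency out of this encoding.
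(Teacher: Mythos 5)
Your plan correctly identifies the central obstruction --- the connective constant of $\mathbb T_6$ exceeds $3$, so no enumeration of boundary contours of length $n$ can beat $(1-p)^n$ at $p=2/3$ --- but it stops exactly there: the ``enumeration step'' you defer to is the whole content of the proof, and the paper does not solve it by enumeration at all. Two ideas are missing. First, the paper never conditions on the full cluster $S$ with weight $p^{|S|}$; instead it works with the pair consisting of the \emph{outer interface} $M$ (the vertices of the cluster meeting its unbounded face) and the outer boundary $B$, whose occurrence has probability $p^{|M|}(1-p)^{|B|}$, and proves the ratio bound $|M|\le 2|B|$ (\lemma{unzip}, via an ``unzipping'' of $B$ followed by the Angel--Benjamini--Horesh layer inequality, \lemma{iso-layer}). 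It is this ratio $2$ that produces the threshold $2/(1+2)=2/3$; your decomposition has no analogous quantity. Second, the multiplicity of pairs with $|B|=n$ is not counted combinatorially but bounded \emph{deterministically per configuration}: \lemma{poly} shows that in any single percolation instance at most $f(n)=\bound$ pairs in $\pairs_n$ can ``almost occur'', because each such pair is reconstructed from the instance after forcing at most $f(n)$ vertices of a fixed geodesic ray to be occupied --- and here the quadratic ABH volume bound (\corollary{volume}) is used to cap the length of that forced segment, not to weight the interior. Taking expectations at $q=2/3$ gives $\sum_m b_{n,m}q^m(1-q)^n\le f(n)$, and the tilting factor $(p/q)^m\bigl((1-p)/(1-q)\bigr)^n$ with $m\le 2n$ then makes the expected number of occurring pairs summable for every $p>2/3$ (\theorem{main}). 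Without these two ingredients your Peierls sum cannot be closed.

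A smaller point: your worry that triangulating by adding edges ``moves the threshold the wrong way'' is misplaced. The paper does add edges to form a triangulation $T(M,B)$, but only as a combinatorial device so that the isoperimetric lemmas for triangulations apply to $M$ and $B$; no comparison of percolation thresholds between $G$ and the triangulation is ever made, so monotonicity of $\pc$ under edge addition is irrelevant.
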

\noindent
In section \ref{hyper} we study plane graphs of minimal degree at least $7$ and we prove the aforementioned conjecture of Benjamini and Schramm.
\begin{thm}Let $G$ be a plane graph with minimum degree at least $d\geq7$. Then \[\pc(G)\leq\frac{2+\alpha_d}{(d-3)(1+\alpha_d)}\] where $\alpha_d=\frac{d-6-\sqrt{(d-2)(d-6)}}{2}$. In particular, for plane graphs with minimum degree at least $7$ \[\pc(G)\leq \frac{2+\alpha_7}{4(1+\alpha_7)}\approx 0.3455 \]\end{thm}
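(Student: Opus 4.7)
The plan is to combine a sharp vertex isoperimetric inequality for plane graphs of minimum degree at least $d$ with a refined cluster-exploration argument, both of which I expect to be developed earlier in Section~\ref{hyper}.

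First, we establish an isoperimetric bound. For a finite connected vertex set $S$ in a plane graph of minimum degree at least $d \geq 7$, let $H$ denote the subgraph induced on $S \cup \partial S$. Planarity gives $|E(H)| \leq 3(|S|+|\partial S|)-6$, while the minimum-degree hypothesis gives $\sum_{v\in S}\deg_H(v) \geq d|S|$. Refining this standard Euler-formula count, by tracking how faces of $G$ are divided in $H$ and carefully bookkeeping vertex contributions, yields a lower bound $|\partial S| \geq h_d |S|$ for an explicit $h_d$. The value of $h_d$ should be determined by the quadratic $\alpha^2 = (d-6)(\alpha+1)$ satisfied by $\alpha_d$, as can be verified by direct algebraic manipulation of the stated threshold.

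Second, we convert the isoperimetric inequality into the percolation bound. The naive Benjamini--Schramm bound $\pc \leq (1+h_d)^{-1}$ alone would yield a weaker threshold, so the new technique must refine it. The natural approach is a weighted exploration: we explore $C_v$ in BFS (or similar) order and maintain a potential function assigning weight $1$ to each open cluster vertex and weight $\alpha_d$ to each unrevealed boundary vertex. When $p$ exceeds the claimed threshold, the expected change in the potential at each step should be strictly negative, making the exploration terminate almost surely. The drift calculation involves the factor $d-3$, which counts the minimum number of fresh neighbours contributed by each newly discovered cluster vertex in a plane graph of minimum degree $d$ (after excluding the parent and typical in-layer neighbours), and the identity $\alpha_d^2 = (d-6)(\alpha_d+1)$ is the algebraic condition that makes the drift vanish at the critical value.

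The main obstacle is twofold. First, one must prove the isoperimetric inequality in its sharp form, uniformly over plane graphs of minimum degree $\geq d$, which allow arbitrary face structure (from the triangulation $H_{d,3}$ at one extreme to graphs with large faces at the other); matching the sharp constant to $\alpha_d$ is delicate, since the extremal configuration need not be a regular hyperbolic lattice. Second, one must rigorously set up the weighted exploration and drift computation so that the supermartingale condition produces exactly the quadratic defining $\alpha_d$, rather than a loose estimate. The interplay between the minimum-degree hypothesis, the planarity constraint, and the probabilistic exploration — producing the specific algebraic form of the threshold — is the core technical content and the place where I expect the proof to require the most care.
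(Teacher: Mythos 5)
There is a genuine gap --- in fact two. Your proposal correctly guesses the quadratic $\alpha_d^2=(d-6)(1+\alpha_d)$ that characterises the constant, but both halves of your plan are left as ``obstacles'' rather than carried out, and in each case the missing content is precisely the substance of the proof. For the isoperimetric inequality, a single Euler-formula count of the kind you sketch (edges of the induced graph on $S\cup\partial S$ versus degree sums) only yields the weaker constant $d-6$: in the paper's notation (\lemma{hyper-layer}), one application of Euler's formula to a minimal cut $X$ with interior layers $Y$ (adjacent to $X$) and $Z$ gives $\abs X\geq(d-5)\abs Y+(d-6)\abs Z+5$, hence only $(d-6)\abs{Y\cup Z}\leq\abs X-5$. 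Upgrading $d-6$ to $\alpha_d$ requires an induction on $\abs Z$ that peels off the layer $Y$, splits it into the minimum cuts $(Y_i)$ surrounding the clusters of $Z$, controls $\sum_i(\abs{Y_i}-2)\leq\abs Y-2$ by an outerplanarity/Euler argument, and feeds the induction hypothesis back through the identity $\alpha_d=d-6+\frac{\alpha_d}{1+\alpha_d}$ (\lemma{hyper-iso}). No amount of ``careful bookkeeping'' in a one-shot face count produces the sharp constant, because the extremal behaviour is genuinely recursive (the balls of $H_{d,3}$ realise it layer by layer).

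The second gap is the conversion step. You correctly observe that $\pc\leq(1+\dot h)^{-1}$ is too weak, but the weighted-exploration/supermartingale mechanism you propose is not set up, and your heuristic for where $d-3$ comes from (``fresh neighbours per discovered vertex'') is not how the bound arises. The paper's route is entirely different: it bounds the surface-to-volume ratio $\abs M/\abs B$ of outer interface/boundary pairs and invokes the Peierls-type \theorem{main}, which gives $\pc\leq\alpha/(1+\alpha)$ whenever $\abs M\leq\alpha\abs B$ always holds. Concretely, one ``unzips'' $B$ into a disc triangulation with boundary of length at most $2\abs B-\abs{B^\circ}$ (\lemma{unzip}) and applies \lemma{hyper-layer} to get $(d-5)\abs M\leq 2\abs B-\abs{B^\circ}$, then applies \lemma{hyper-iso} to the cut $B^\circ$ to get $\alpha_d\abs M\leq(1+\alpha_d)\abs{B^\circ}-\alpha_d\abs B$; eliminating $\abs{B^\circ}$ yields $\bigl(d-5+(d-4)\alpha_d\bigr)\abs M\leq(2+\alpha_d)\abs B$, and the factor $d-3$ appears only when forming $\alpha/(1+\alpha)$. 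Without either a worked-out drift computation (including how planarity enters it) or this interface bound, your argument does not reach the stated threshold. As a minor remark, the constant in the statement should read $\alpha_d=\frac{d-6+\sqrt{(d-2)(d-6)}}{2}$ (the positive root); the negative root would make the claimed bound meaningless for $d=7$.
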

For plane graphs without faces of degree $3$, a minimum vertex degree of $5$ is sufficient to ensure non-amenability; in section \ref{quad} we give bounds on the site percolation threshold in this case.
\begin{thm}Let $G$ be a plane graph with minimum degree at least $d\geq5$ and minimum face degree at least $4$. Then \[\pc(G)\leq\frac{(2+\alpha_{d+2})(d-2)}{(1+\alpha_{d+2})(d^2-3d+1)}.\]\end{thm}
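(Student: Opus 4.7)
The plan is to run the same strategy as for Theorem~2 but with the triangular extremal replaced by its quadrilateral analogue. Via the Benjamini--Schramm bound $\pc(G)\le 1/(1+\dot h(G))$ (or the refinement on which Theorems~1 and~2 rely) the problem reduces to showing a vertex isoperimetric inequality $\dot h(G)\ge \dot h(H_{d,4})$, and then computing this extremal constant in closed form. Here $H_{d,4}$ is the regular square hyperbolic tessellation, whose vertex isoperimetric constant is announced in the abstract as one of the paper's main contributions.

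The isoperimetric inequality itself I would prove by an Euler / discharging argument on the finite subgraph of $G$ induced by $S\cup\partial S$ together with the bounded faces of $G$ it encloses, exactly as in the triangular case, but now using the face inequality $2E\ge 4F$ instead of $2E\ge 3F$. The hypotheses $d\ge 5$ and minimum face degree at least $4$ give $(d-2)(4-2)>4$, hence a strictly positive combinatorial curvature deficit at every interior vertex that can be pushed out to $\partial S$; choosing the discharging weights carefully allows the bound to be made tight on balls in $H_{d,4}$. The closed form for $\dot h(H_{d,4})$ then follows from the two-step linear recurrence satisfied by sphere sizes in $H_{d,4}$, whose characteristic quadratic has dominant root encoded by $\alpha_{d+2}$; the apparent shift $d\mapsto d+2$ is natural because taking a second-neighbour step across a quadrilateral face mimics a first-neighbour step in a triangulation of higher vertex degree. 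Substituting $\dot h(H_{d,4})$ into $1/(1+\dot h)$ and simplifying produces the stated bound, with the prefactor $(d-2)/(d^2-3d+1)$ absorbing the combinatorial discrepancy between $3$- and $4$-faces.

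The main obstacle will be the sharpness of the isoperimetric inequality. A crude discharging argument readily yields \emph{some} positive lower bound on $\dot h(G)$, but matching $\dot h(H_{d,4})$ requires precise apportioning of curvature charge across the vertex boundary and the enclosed faces, together with a verification that the $H_{d,4}$ balls (or some equally explicit family) are in fact extremal. This is directly parallel to the triangular question of Lyons and Peres that the paper resolves, and should require a comparable level of care; once it is settled, the remaining algebraic manipulations to express the bound in terms of $\alpha_{d+2}$ are routine.
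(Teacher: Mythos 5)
There is a genuine gap, and it is in the final step that you describe as ``routine.'' Your plan is to prove the vertex isoperimetric inequality $\dot h(G)\geq\alpha_{d+2}$ and then substitute into the Benjamini--Schramm bound $\pc(G)\leq(1+\dot h(G))^{-1}$. But that substitution yields only $\pc(G)\leq(1+\alpha_{d+2})^{-1}$, which is strictly weaker than the claimed bound: no algebraic simplification turns $(1+\alpha_{d+2})^{-1}$ into $\frac{(2+\alpha_{d+2})(d-2)}{(1+\alpha_{d+2})(d^2-3d+1)}$. Indeed the paper records exactly this comparison for $d=5$: the Cheeger route gives $\approx0.3820$ while the stated bound is $\approx0.3769$. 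The prefactor $(d-2)/(d^2-3d+1)$ does not ``absorb a combinatorial discrepancy'' in the Cheeger computation --- it comes from a different mechanism entirely, namely the interface method of \theorem{main}. To get the stated constant one must bound the ratio $\abs M/\abs B$ for outer interface--boundary pairs $(M,B)\in\pairs$, not merely the boundary-to-volume ratio of arbitrary finite sets. Concretely, the paper unzips $B$ into a simple cycle of length at most $2\abs B-\abs{B^\circ}$, applies the layer inequality $\abs X\geq(d-3)\abs Y+(d-4)\abs Z+3$ (\lemma{hyper-quad-layer}) both to this unzipped boundary and again to the second layer to get $\abs Y\geq\frac{d-3}{d-2}\abs M$, and separately applies the isoperimetric inequality to the minimal cut $B^\circ$; a linear combination of the two resulting inequalities eliminates $\abs{B^\circ}$ and produces $\abs M\leq\alpha\abs B$ with $\alpha=\frac{(2+\alpha_{d+2})(d-2)}{d^2-5d+5+\alpha_{d+2}(d^2-4d+3)}$, after which \theorem{main} gives $\pc\leq\alpha/(1+\alpha)$. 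None of this is present in your outline.

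Your treatment of the isoperimetric inequality itself is broadly in the right spirit (an Euler-formula count on $S\cup\partial S$ using $2E\geq4F$, with the layer structure handled by induction), and the recurrence argument for the sharpness on $H_{d,4}$ matches the paper's \theorem{quad-cheeger}. But note that sharpness is irrelevant to the percolation bound --- only the lower bound on $\dot h$ is used there --- so the ``main obstacle'' you identify is not the one blocking this theorem. The missing idea is the passage from isoperimetry to a surface-to-volume bound on interfaces.
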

\noindent
As far as we know, these results are new even for the $d$-regular triangulations and quadrangulations of the hyperbolic plane. In the process, we obtain best-possible bounds on the vertex Cheeger constants $\dot h(G)$ of such graphs.
\begin{thm}
Let $G$ be a plane graph with minimum degree at least $d\geq7$. Then
\[\dot h(G)\geq \alpha_d.\] If $G$ is the $d$-regular triangulation of the hyperbolic plane, then we have equality.
\end{thm}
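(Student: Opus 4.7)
Let $S$ be a finite non-empty vertex set with boundary $\partial S$, and write $s := |S|$, $b := |\partial S|$, $H := G[S \cup \partial S]$ with its inherited plane embedding. The plan is to combine Euler's formula on $H$ with the degree hypothesis, and then sharpen the resulting count to extract the quadratic constant $\alpha_d$.

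First I would perform a standard reduction to a simply-connected region: replace $S$ by its union with all finite components of $V(G) \setminus (S \cup \partial S)$. This operation weakly increases $s$ and weakly decreases $b$, so it suffices to assume that the complement of $S \cup \partial S$ in $V(G)$ has no finite components. Under this assumption every bounded face of $H$ is a face of $G$ (and so has degree at least $3$), while the outer face of $H$ is bounded by a closed walk using only vertices of $\partial S$. Arguing components separately, we may assume $H$ is connected, and after a further reduction that $H$ is $2$-connected with outer face a simple cycle of length $b$.

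Next, Euler's formula applied to $H$, together with the face-degree bound $\deg(f) \geq 3$ for bounded faces and $\deg(f_\infty) = b$ for the outer face, gives $E_H \leq 3s + 2b - 3$. Double-counting vertex degrees, each $v \in S$ contributes $\deg_H(v) = \deg_G(v) \geq d$, while each $v \in \partial S$ contributes $\deg_H(v) \geq 3$ (one $S$-neighbour plus two outer-cycle neighbours), so $2E_H \geq ds + 3b$. Combining these produces the preliminary bound $b \geq (d-6)s + 6$, which is strictly weaker than the desired $b \geq \alpha_d s$: indeed $\alpha_d$ is the larger root of $\alpha^2 = (d-6)(\alpha+1)$, and $\alpha_d > d-6$ for every $d \geq 7$.

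The sharpening step is the crux of the argument. I would exploit the structure of the outer boundary more carefully: at each $v \in \partial S$, the $S$-neighbours of $v$ form a contiguous arc in the cyclic order around $v$, and in the extremal (triangulation) case each pair of consecutive $S$-neighbours of $v$ is joined by an edge of $G[S]$. Incorporating this additional edge count into the Euler estimate yields a quadratic constraint linking $s$ and $b$ whose tight solution is precisely $\alpha_d$, via the identity $\alpha_d^2 = (d-6)(\alpha_d+1)$. For the equality statement on $H_{d,3}$, I would take $S = B_n$, the ball of radius $n$, and use that the sphere sizes $a_n$ satisfy a second-order linear recurrence whose dominant characteristic root equals $\alpha_d + 1$; this gives $|\partial B_n|/|B_n| \to \alpha_d$ and hence tightness of the lower bound.

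The main obstacle is this sharpening step: the naive Euler-plus-degree argument delivers only the linear bound $b \geq (d-6)s$, and promoting this to the algebraic irrational $\alpha_d$ requires a refinement that goes beyond counting vertices and faces. The natural candidate is a weighted combinatorial Gauss-Bonnet estimate, in which appropriate weights are assigned to vertices in $S$ and in $\partial S$ so that the relation $\alpha_d^2 = (d-6)(\alpha_d + 1)$ drops out of the accounting; identifying the right weights (or equivalently the right auxiliary combinatorial quantity) is where the work lies.
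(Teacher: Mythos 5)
Your setup and your preliminary Euler count are sound, and your diagnosis is accurate: the naive argument gives only $|\partial S|\geq (d-6)|S|+6$, and the whole difficulty is in upgrading $d-6$ to $\alpha_d$. But that upgrade is exactly the step you leave open. Your proposed fix --- refining the edge count along the outer boundary, or finding a ``weighted Gauss--Bonnet'' assignment --- does not work as stated: no single application of Euler's formula to the fixed graph $G[S\cup\partial S]$, however the vertices and faces are weighted, produces the quadratic relation $\alpha_d^2=(d-6)(\alpha_d+1)$, because that relation encodes a self-similarity of the extremal sets (each layer of a hyperbolic ball relates to the next in the same fixed ratio), which is invisible in one round of counting. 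The equality half of your argument (balls in $H_{d,3}$ and the second-order recurrence with dominant characteristic root $1+\alpha_d$) is correct and matches the paper.

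The paper's actual mechanism is a layered induction which you would need to supply. For a minimal vertex cut $X$ enclosing a finite region, split the interior into the first layer $Y$ (vertices adjacent to $X$) and the rest $Z$. The Euler/degree count, performed after triangulating and adding an apex vertex in the unbounded face, gives the refined linear inequality $|X|\geq(d-5)|Y|+(d-6)|Z|+5$: crucially the first layer receives the better coefficient $d-5$, because the number of edges between $X$ and $Y$ is at least $|X|+|Y|-1$ rather than the $|X|$ that your degree-at-least-$3$ bound for boundary vertices yields. One then inducts on $|Z|$: the clusters $Z_1,\dots,Z_k$ of $Z$ are surrounded by minimal vertex cuts $Y_1,\dots,Y_k\subseteq Y$, an outerplanarity argument gives $\sum_i(|Y_i|-2)\leq|Y|-2$, and the induction hypothesis $\alpha_d|Z_i|\leq|Y_i|-5$ yields $\alpha_d|Z|\leq|Y|-5$, hence $|Y|>\frac{\alpha_d}{1+\alpha_d}|Y\cup Z|$. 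Substituting this into the linear inequality and using the fixed-point identity $\alpha_d=d-6+\frac{\alpha_d}{1+\alpha_d}$ closes the induction; a final decomposition of $\partial W$ into minimal cuts around the clusters of $W$ gives the Cheeger bound for an arbitrary finite set $W$. Without this recursive structure (or an equivalent replacement for it) your proof does not go through.
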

\begin{thm}
Let $G$ be a plane graph with minimum degree at least $d\geq5$ and minimum face degree at least $4$. Then
\[\dot h(G)\geq \alpha_{d+2}.\] If $G$ is the $d$-regular quadrangulation of the hyperbolic plane, then we have equality.
\end{thm}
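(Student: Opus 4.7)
My plan is to follow the strategy of the preceding theorem on $d$-regular triangulations, replacing the minimum face degree hypothesis $3$ by $4$; this is what produces the shift $d \to d+2$ in the formula. Let $S\subset V(G)$ be finite; the goal is to show $|\partial S|\geq \alpha_{d+2}|S|$. First, by absorbing any bounded component of $V\setminus S$ enclosed by $S$ into $S$ (which only decreases $|\partial S|/|S|$), I reduce to the case where the filled region $C_S$, consisting of $S$ together with the edges of $G$ with both endpoints in $S$ and the faces of $G$ with all vertices in $S$, is a topological disk whose boundary is a simple cycle of length $\ell$.

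Next I would double count using Euler's formula together with the two degree hypotheses. Writing $n=|S|$, $m=|E(C_S)|$, $f=|F(C_S)|$, Euler's formula gives $n-m+f=1$. The minimum face-degree hypothesis $\geq 4$, combined with the observation that interior (resp.\ boundary) edges of $C_S$ lie on two (resp.\ one) of its faces, yields $4f\leq 2m-\ell$, and hence $m\leq 2n-2-\ell/2$. Summing vertex degrees, $2m+|\partial_E S|\geq dn$, whence
\[|\partial_E S|\geq (d-4)n+\ell+4.\]

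The hard part is converting this edge-boundary bound into the desired lower bound on the vertex boundary $|\partial S|$. Since $G$ may have unbounded maximum degree, a single vertex $w\in\partial S$ could a priori contribute many edges to $\partial_E S$, so a naive conversion is unavailable. The face-degree hypothesis provides the essential control: between any two cyclically consecutive edges from $w$ to $S$ lies a face of $G$ of degree $\geq 4$, which must contain at least one further vertex beyond $w$ and the two $S$-neighbours. A careful planar accounting of these intermediate vertices, together with a second application of Euler (for example, to the larger subcomplex associated to $S\cup\partial S$), should produce an inequality of the form $|\partial_E S|\leq c_d |\partial S|+\text{(terms in }\ell\text{)}$. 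Combining with the previous inequalities and the algebraic identity $\alpha_{d+2}^2=(d-4)(\alpha_{d+2}+1)$---equivalently, $1+\alpha_{d+2}$ is the positive root of $x^2-(d-2)x+1=0$, which is the characteristic equation for sphere growth in the $\{4,d\}$ tiling---makes the system collapse to $|\partial S|\geq \alpha_{d+2}n$.

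For the equality case I would verify directly on the $d$-regular quadrangulation $Q_d$ that the sphere sizes $|L_r|$ around a fixed vertex satisfy the linear recurrence with characteristic polynomial $x^2-(d-2)x+1$, so that $|L_r|\sim c\lambda^r$ with $\lambda=1+\alpha_{d+2}$, and therefore taking $S$ to be a ball of radius $r$ gives $|\partial S|/|S|=|L_{r+1}|/|B_r|\to \lambda-1=\alpha_{d+2}$ as $r\to\infty$, matching the lower bound. The main obstacle is clearly the edge-to-vertex boundary conversion in the third paragraph, which must use the face-degree hypothesis in an essential way in order to avoid a loss depending on the maximum degree of $G$.
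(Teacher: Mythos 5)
Your outline has the right ingredients in the right places --- Euler's formula with the two degree hypotheses, reduction to a filled disc, and the recurrence with characteristic polynomial $x^2-(d-2)x+1$ for the equality case on $H_{d,4}$ (which does match the paper's Theorem~\ref{quad-cheeger}) --- but the step you defer to ``a careful planar accounting'' is not a technicality: it is where the whole theorem lives, and the specific patch you propose cannot close it. A single application of Euler's formula to the filled region, however you then convert edges to vertices, yields at best an inequality of the shape $\abs{\partial S}\geq(d-4)\abs S+\abs Y+O(1)$, where $Y$ is the outermost layer of $S$ (your $\ell$ is essentially $\abs Y$). Since $\alpha_{d+2}>d-4$, this reaches $\alpha_{d+2}\abs S$ only if one \emph{also} proves $\abs Y\geq\frac{\alpha_{d+2}}{1+\alpha_{d+2}}\abs S$ (note $\alpha_{d+2}=d-4+\frac{\alpha_{d+2}}{1+\alpha_{d+2}}$), and nothing in your sketch addresses that. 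Worse, the bridging inequality you propose, $\abs{\partial_E S}\leq c_d\abs{\partial S}+(\text{terms in }\ell)$, is forced to have $c_d\geq 1$ because every vertex of $\partial S$ sends at least one edge into $S$; combined with $\abs{\partial_E S}\geq(d-4)n+\ell+4$ this gives at best $\abs{\partial S}\geq(d-4)n+\ell+4$, which again falls short of $\alpha_{d+2}n$ unless the lower bound on $\ell=\abs Y$ is established separately.

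The paper closes this gap by two devices absent from your proposal. First, its Lemma~\ref{hyper-quad-layer} never passes through the edge boundary of $S$: it takes a \emph{minimal vertex cut} $X\subseteq\partial S$, completes $X$ to a cycle, applies Euler's formula to the resulting finite plane graph, and bounds the vertex count directly, $\abs X\geq(d-3)\abs Y+(d-4)\abs Z+3$ with $Z=S\setminus Y$ --- so the edge-to-vertex conversion you identify as the obstacle simply does not arise. Second, and this is the key idea, Lemma~\ref{hyper-quad-iso} proves $\abs Y\geq\frac{\alpha_{d+2}}{1+\alpha_{d+2}}\abs{Y\cup Z}$ by an induction running \emph{inward} through the nested layers of $S$: the layer $Y$ contains minimal vertex cuts $Y_1,\dots,Y_k$ surrounding the clusters $Z_1,\dots,Z_k$ of $Z$; the induction hypothesis gives $\alpha_{d+2}\abs{Z_i}\leq\abs{Y_i}-3$; and an outerplanarity/Euler argument shows $\sum_i(\abs{Y_i}-2)\leq\abs Y-2$, whence $\alpha_{d+2}\abs Z\leq\abs Y-3$ and the quadratic identity you quote closes the induction. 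Your suggested ``second application of Euler to the subcomplex associated to $S\cup\partial S$'' goes outward rather than inward and produces no analogue of this nested-layer recursion, so as written the argument has a genuine gap at its central point.
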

\noindent
The second halves of the last two theorems answer the aforementioned question of Lyons and Peres for all cases with $d'=3$ and $d'=4$. In particular, the vertex isoperimetric constant for both the $7$-regular triangulation and the $5$-regular quadrangulation, $\alpha_7$, is the golden ratio.

Our results on percolation thresholds follow from bounding the `surface to volume ratio' of \textit{outer interfaces}. This notion was introduced in \cite{analyticity} to prove that in $2$-dimensional Bernoulli percolation, the percolation density is an analytic function on the interval $(p_c,1]$. In the case of triangulations, outer interfaces coincide with the standard notion of inner vertex boundary appearing in the literature. See also \cite{ExpGrowth} for some further interesting properties and open problems regarding them.

\section{Definitions and main technique}

Let $G$ be an infinite connected locally finite plane graph, and fix a root vertex $o$. Throughout, we assume that $G$ is embedded in the plane without accumulation points, \ie only finitely many vertices lie in the region enclosed by any cycle of the graph. Site percolation with intensity $p$ on $G$ is a random function $\vecom:V(G)\mapsto\{0,1\}$ for which $\{\omega(v):v\in V(G)\}$ 
are independent Bernoulli variables with parameter $p$. We say that a vertex $v$ is \textit{occupied} in an instance $\vecom$
if $\omega(v)=1$ and \textit{unoccupied} otherwise. If $o$ is occupied, the \textit{occupied cluster} of $o$ is the component of 
$o$ in the subgraph induced by all occupied vertices. The critical probability $\pc(G)$ is the infimum of
all intensities for which there is a positive probability of this cluster being infinite, which does not depend on $o$.

Let $C_o$ be any finite connected induced subgraph containing $o$. The \textit{outer interface} of $C_o$ consists of all vertices of $C_o$ 
meeting the unbounded face of $C_o$. Deleting all vertices of $C_o$ from $G$ divides the remaining graph into components, 
at least one of which lies in the unbounded face of $C_o$. Write $C_{\infty}$ for the union of all components lying in the unbounded face of $C_o$.
The \textit{outer boundary} of $C_o$ is the set of vertices in $C_\infty$ adjacent to $C_o$. 
Denote the outer interface by $M$ and the outer boundary by $B$. By definition, $M$ induces a connected subgraph of $G$ and $B$ forms a vertex
cut separating $o$ from infinity. See Figure \ref{grids} for an example; in this case $C_\infty$ is not connected but there is also another component not in $C_\infty$.

\begin{figure}
\begin{center}
\includegraphics[width=.7\textwidth]{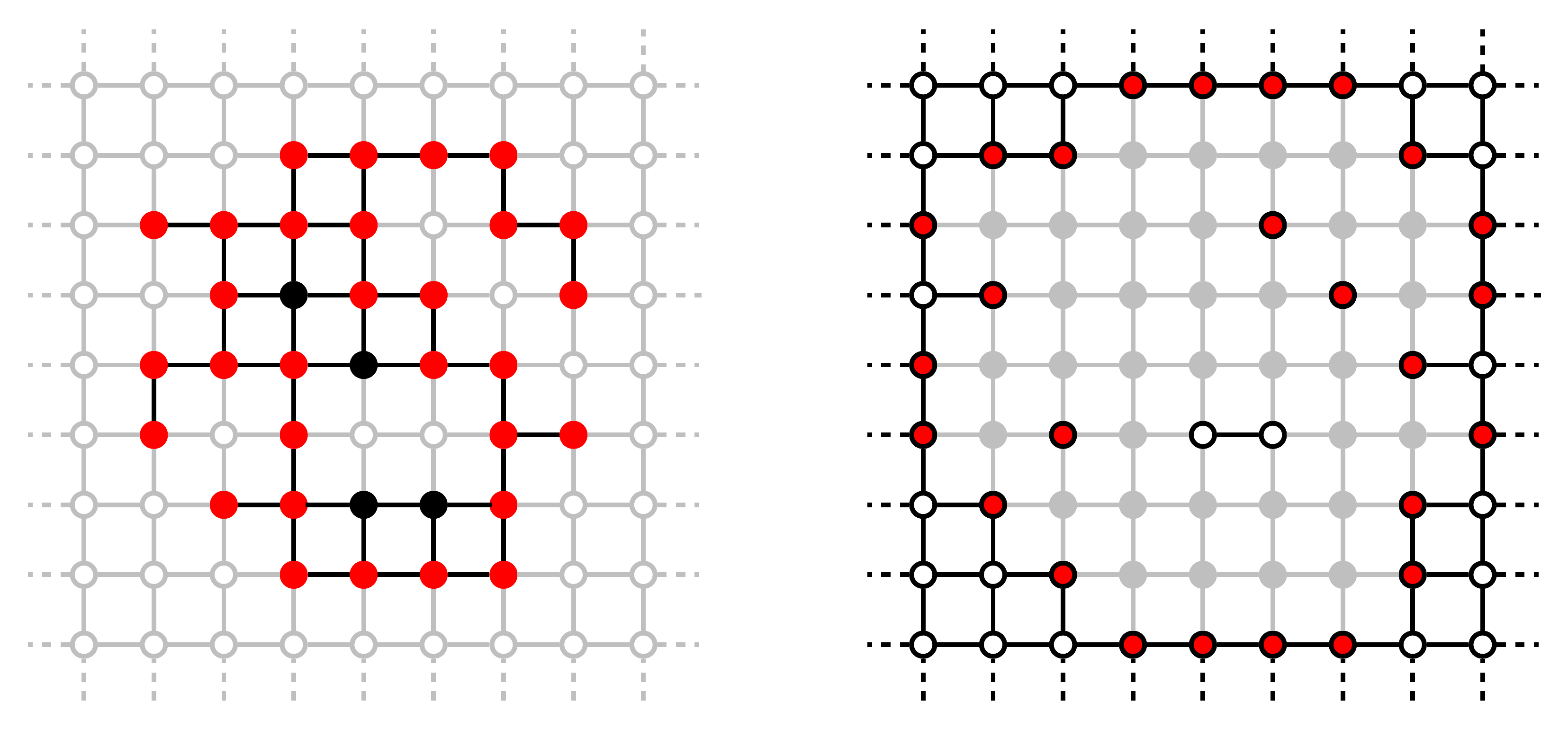}
\end{center}
\caption{A connected induced subgraph $C_o$, with its outer interface highlighted (left) and the remaining graph with the corresponding outer boundary highlighted (right).}\label{grids}
\end{figure}

Note that, while (in general) neither $M$ nor $B$ uniquely determine $C_o$, each uniquely determines $C_\infty$. 
In fact, it is the union of components of $G\setminus M$ lying in the outer face of $M$, and it is also the union of $B$ and the set of vertices 
not connected to $o$ in $G\setminus B$. Since $M$ is also the outer interface of $G\setminus C_\infty$ and $B$ coincides with the set of vertices in $C_\infty$ adjacent to $M$, 
each of $M$ and $B$ uniquely determines the other. Let the set of feasible pairs $(M,B)$ be $\pairs$, 
and for each $n$ let $\pairs_n=\{(M,B)\in\pairs:\abs B=n\}$. 

We say that a pair $(M,B)\in\pairs$ \textit{occurs} in a site percolation instance $\vecom$ if $\omega(m)=1$ for each 
$m\in M$ and $\omega(b)=0$ for each $b\in B$. Note that, in a site percolation instance $\vecom$ with $\omega(o)=1$, 
the occupied cluster of $o$ is infinite if and only if no pair $(M,B)\in\pairs$ occurs. Since each outer boundary forms a vertex cut
separating $o$ from infinity, the occurrence of a pair certainly precludes $o$ being in an infinite cluster, whereas if $o$ is
in a finite cluster $C_o$ then the outer interface and outer boundary of $C_o$ form an occurring pair.

Our main technique is to upper bound the ratio $\abs M/\abs B$ for $(M,B)\in\pairs$ and consequently to show that the probability of occurrence
of any given pair is decreasing for $p$ above a certain value. Provided $G$ satisfies an isoperimetric inequality of moderate strength,
we then deduce that with positive probability no pair occurs. This latter condition essentially requires non-positive curvature, and so
the smallest minimum degree to guarantee this is $6$ for the general case and $5$ for the triangle-free case.

\begin{thm}\label{main}Suppose that there exist a real number $\alpha$ and a function $f(n)$ of sub-exponential growth with the following properties.
\begin{enumerate}[(i)]\item\label{ratio-condition} For each pair $(M,B)\in\pairs$, we have $\abs M\leq\alpha\abs B$.
\item\label{poly-condition} For each outer boundary $B$, the component of $o$ in $G\setminus B$ contains at most $f(\abs B)$ vertices. 
\end{enumerate}
Then $\pc(G)\leq \frac\alpha{1+\alpha}$.\end{thm}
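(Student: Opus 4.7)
The plan is a Peierls-style union bound argument. To prove $\pc(G) \leq \alpha/(1+\alpha)$, it suffices to show that for every $p > \alpha/(1+\alpha)$ the occupied cluster of $o$ is infinite with positive $\mathbb{P}_p$-probability. If $\omega(o) = 1$ and $C_o$ is finite, then the pair $(M,B)\in\pairs$ associated with $C_o$ occurs, so by the union bound
\[
\mathbb{P}_p\bigl(\omega(o) = 1,\ \abs{C_o} < \infty\bigr) \;\leq\; \sum_{(M,B)\in\pairs} \mathbb{P}_p\bigl((M,B)\text{ occurs}\bigr) \;=\; \sum_{(M,B)\in\pairs} p^{|M|}(1-p)^{|B|},
\]
and it is enough to prove that this sum is strictly less than $p$, since then $\mathbb{P}_p(\abs{C_o}=\infty)\geq p-\sum >0$.

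To control each term I would first use condition (i), namely $|M|\leq\alpha|B|$, which together with $1-p<1$ gives
\[
p^{|M|}(1-p)^{|B|} \;\leq\; \bigl(p\,(1-p)^{1/\alpha}\bigr)^{|M|}.
\]
A routine log-derivative calculation shows that $g(p) := p(1-p)^{1/\alpha}$ is maximised on $(0,1)$ precisely at $p^* := \alpha/(1+\alpha)$. Consequently, for $p>p^*$ each term decays geometrically in $|M|$ at a rate strictly better than the critical value $g(p^*)=\alpha/(1+\alpha)^{1+1/\alpha}$.

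To turn this per-pair bound into a summable one, I would stratify pairs by $n = |B|$ and invoke condition (ii): the component of $o$ in $G\setminus B$ has at most $f(n)$ vertices, and it contains the connected set $M$, which in turn determines the pair. Combined with the planarity of $G$ (so that one can enumerate $M$ via a canonical walk tracing its boundary inside the finite component), this should yield a bound on $|\pairs_n|$ that grows only sub-exponentially in $n$. Once this is in hand, summing the per-pair bound against the count shows that the total is strictly less than $p$ for $p$ slightly above $p^*$, so $\mathbb{P}_p(\abs{C_o}=\infty)>0$; letting $p\downarrow p^*$ then gives $\pc(G)\leq\alpha/(1+\alpha)$.

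The main obstacle is the combinatorial count of $|\pairs_n|$. Since $G$ may have unbounded degree, the classical $(e\Delta)^m$ bound on the number of connected subgraphs of size $m$ containing a fixed vertex is unavailable; the crux is therefore to exploit planarity, together with the sub-exponential function $f$ from condition (ii), to bound $|\pairs_n|$ by a quantity that grows slowly enough for the geometric decay coming from condition (i) to dominate. Matching the combinatorial growth rate against $1/g(p^*)$ is exactly what makes the critical threshold $\alpha/(1+\alpha)$ tight in this argument.
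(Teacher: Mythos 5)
Your skeleton (Peierls union bound, the observation that condition (i) makes $p^{\abs M}(1-p)^{\abs B}$ critical exactly at $p=\alpha/(1+\alpha)$) matches the paper's starting point, but the step you flag as ``the main obstacle'' is a genuine gap, and the way you propose to close it cannot work. The quantity $\abs{\pairs_n}$ is \emph{not} sub-exponential in $n$ under the hypotheses: in the intended applications (e.g.\ hyperbolic lattices) the number of vertex cuts of size $n$ separating $o$ from infinity grows exponentially in $n$ -- that entropy is precisely what the energy factor $p^{\abs M}(1-p)^{\abs B}$ has to beat, and if $\abs{\pairs_n}$ really were sub-exponential your argument would prove $\pc(G)$ is essentially $0$, not $\alpha/(1+\alpha)$. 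Your proposed enumeration does not rescue this: condition (ii) only places $M$ inside a connected region of at most $f(n)$ vertices around $o$ (and $M$ need not contain $o$ or any other fixed vertex), so counting connected subsets there gives at best something like $2^{f(n)}$, which is exponential already for linear $f$ and worse for the quadratic $f$ actually used in Section~3; a boundary-walk enumeration likewise gives a bound exponential in $\abs M$, hence in $n$.

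The paper's resolution is a different counting device that you would need to supply. Fix a geodesic ray $R$ from $o$, let $R_B$ be its longest initial segment avoiding $B$, set $M'=M\setminus R_B$, and say $(M,B)$ \emph{almost occurs} if $M'$ is occupied and $B$ unoccupied. In any \emph{single} instance at most $f(n)$ pairs of $\pairs_n$ can almost occur, because switching $R_B$ to occupied produces an instance in which $(M,B)$ is recovered as the interface of the cluster of $o$, and there are at most $f(n)$ choices of modified instance. Taking expectations of this deterministic bound at the critical intensity $q=\alpha/(1+\alpha)$ yields $\sum_m b_{n,m}q^m(1-q)^n\leq f(n)$, where $b_{n,m}$ counts pairs with $\abs B=n$, $\abs{M'}=m$; this weighted count is what replaces the (false) combinatorial bound on $\abs{\pairs_n}$, and for $p>q$ one gets $\mean{X_n}\leq\bigl((1+\alpha)^{1+\alpha}\alpha^{-\alpha}p^\alpha(1-p)\bigr)^nf(n)$, which is summable. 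A secondary gap: your final sum over \emph{all} pairs cannot be made less than $p$, since pairs with small $\abs B$ (e.g.\ $M=\{o\}$, $B$ its neighbourhood) contribute a bounded-below amount; the paper handles this by discarding $n<n_0$ and intersecting with the independent positive-probability event that the first $f(n_0)$ vertices of $R$ are occupied, which forces $\abs{R_B}<f(n_0)$ and hence $\abs B<n_0$ for any occurring pair.
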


We will use a Peierls-type argument (see \eg \cite[Theorem 4.1]{Pete}) to show that with positive probability $o$ is in an infinite
component for site percolation with any intensity $p>\frac\alpha{1+\alpha}$. It is sufficient to show that with positive probability no
pair $(M,B)\in\pairs$ occurs; in fact we shall find it more convenient to work with a slightly weaker notion than occurrence.
The key observations are that not too many pairs of any given size can occur for $p=\frac\alpha{1+\alpha}$, and that the expected number of
occurring pairs at any given higher intensity is exponentially smaller.

In $G$, pick a geodesic $R$ from $o$, going to infinity. For any $(M,B)\in\pairs$, let $R_B$ be the longest initial
segment of $R$ which does not intersect $B$ (so the next vertex on $R$ is the first intersection with $B$, which must
exist since $B$ is a vertex cut), and define $M'=M\setminus R_B$. We say that the pair $(M,B)$ \textit{almost occurs} 
in a percolation instance if the vertices of $M'$ are occupied and the vertices of $B$ are unoccupied. 

We first need to show that \eqref{poly-condition} gives a bound on the number of almost-occurring pairs.
\begin{lem}\label{poly}At most $f(n)$ elements of $\pairs_n$ almost occur in any instance $\vecom$.\end{lem}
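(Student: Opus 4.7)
The plan is to construct an injection from the set of almost-occurring pairs in $\pairs_n$ into a fixed initial segment of the geodesic $R$ which, by condition \eqref{poly-condition}, has at most $f(n)$ vertices.

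For each almost-occurring pair $(M,B)\in\pairs_n$, let $u_B:=r_{i_B-1}$ denote the last vertex of $R_B$, where $r_{i_B}$ is the first intersection of $R$ with $B$. I first show that $u_B\in M$. The key planar fact is that in a plane embedding, the only vertices outside $C_\infty$ that can be adjacent to a vertex of $C_\infty$ are those in the outer interface $M$: a vertex in a bounded ``hole'' component of $G\setminus C_o$ lies in a bounded face of $C_o$ and so cannot be adjacent to any vertex of $C_\infty$. Hence, tracing $R$ from $o\in C_o$, its first entry into $C_\infty$ must be via a $B$-vertex, and this first entry is $r_{i_B}$; in particular $u_B\in C_o$, and being adjacent to $r_{i_B}\in B\subseteq C_\infty$, it follows that $u_B\in M$. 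Moreover, the segment $r_0,\ldots,r_{i_B-1}$ lies in the component of $o$ in $G\setminus B$, so by \eqref{poly-condition} we have $i_B\leq f(n)$; thus $u_B\in\{r_0,\ldots,r_{f(n)-1}\}$, a set of at most $f(n)$ vertices.

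It then suffices to show that the map $(M,B)\mapsto u_B$ is injective on almost-occurring pairs. Suppose two distinct such pairs share $u_B=r_k$; then both have $v_B=r_{k+1}\in B_1\cap B_2$, both contain $r_k$ in their outer interface, and their initial segments agree: $R_{B_1}=R_{B_2}=\{r_0,\ldots,r_k\}$. The plan to rule this out is to reconstruct the outer interface $M$ from $u_B$ and $\vecom$ alone, by tracing around $M$ in the plane embedding: starting from the oriented edge $u_B v_B$, one repeatedly selects the next edge using the cyclic order at the current vertex together with the almost-occurring conditions ($M\setminus R_B$ occupied and $B$ unoccupied) to classify the neighbors as being in $M$, in $B$, or interior to $C_o$. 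This would yield a unique $M$, forcing the two pairs to coincide. Verifying that this tracing procedure is well-defined and really depends only on $u_B$ and $\vecom$ is the principal obstacle, and is where the planarity of $G$ plays its essential role.
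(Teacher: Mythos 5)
Your counting step matches the paper's: condition \eqref{poly-condition} forces $1\leq\abs{R_B}\leq f(n)$, so the endpoint $u_B$ (equivalently the length of $R_B$) takes at most $f(n)$ values, and the whole lemma reduces to showing that $(M,B)\mapsto u_B$ is injective on the almost-occurring pairs of $\pairs_n$. That injectivity is the entire content of the lemma, and you do not prove it: you sketch a procedure that traces around $M$ in the embedding and then explicitly defer the verification that it is well defined, calling it ``the principal obstacle''. As described, the procedure is genuinely in doubt, because almost-occurrence constrains only the states of $M\setminus R_B$ and of $B$; the vertices of $R_B$, the vertices interior to $C_o$, and the vertices in bounded components of $G\setminus C_o$ have arbitrary states. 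At a vertex of $M$ the cyclic order of neighbours can interleave occupied interior vertices with $M$-vertices and unoccupied interior vertices with $B$-vertices, so a local classification of neighbours from $\vecom$ alone is not available; moreover $C_o$ itself, which you would need in order to decide which neighbours count as ``interior'', is not determined by the pair $(M,B)$.

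The paper closes this gap with a much simpler global argument. Given an almost-occurring $(M,B)\in\pairs_n$, define $\vecom'$ from $\vecom$ by declaring every vertex of $R_B$ occupied and leaving all other states unchanged. Then $M\cup R_B$ induces a connected, fully occupied subgraph containing $o$, while $B$ is unoccupied and separates $o$ from infinity; hence $(M,B)$ is precisely the outer interface and outer boundary of the occupied cluster of $o$ in $\vecom'$, so $\vecom'$ uniquely determines $(M,B)$. Since $\vecom'$ is determined by $\vecom$ together with $\abs{R_B}\in\{1,\dots,f(n)\}$, at most $f(n)$ pairs can almost occur. Replacing your unproved local reconstruction with this one-step global recovery completes the proof; the rest of your write-up (the bound on $\abs{R_B}$ and the observation that $u_B\in M$, which is in fact not needed) is fine.
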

\begin{proof}Fix an instance $\vecom$. Suppose $(M,B)\in\pairs_n$ almost occurs in $\vecom$. Now define a new instance
$\vecom'$ by setting each vertex in $R_B$ to be occupied, leaving the states of other vertices unchanged. Note that $(M,B)$
occurs in $\vecom'$; in fact, since $M\cup R_B$ induces a connected subgraph, $(M,B)$ is the outer interface and boundary of 
the occupied cluster of $o$ in $\vecom'$. Thus $\vecom'$ uniquely determines $(M,B)$.

Since $o\in R_B$ and $R_B$ lies entirely within the component of $o$ in $G\setminus B$, by \eqref{poly-condition}
we have $1\leq\abs{R_B}\leq f(n)$. Thus, given $\vecom$, there are at most $f(n)$ 
possibilities for $\vecom'$ and hence at most this many pairs $(M,B)\in\pairs_n$ almost occur.\end{proof}

\begin{proof}[Proof of \theorem{main}]
We can always assume that $f$ is a strictly increasing function as otherwise we can work with $g(n):=n+\max_{k\leq n} f(k)$ which is a strictly increasing function and also satisfies the properties of $f$ in the statement of the theorem.

Let $b_{n,m}$ be the number of pairs $(M,B)\in\pairs_n$ for which $\abs{M'}=m$. By \eqref{ratio-condition},
whenever $(M,B)\in\pairs_n$ we have $\abs M\leq \alpha n$, and so $b_{n,m}=0$ for $m>\alpha n$.

Let $X_n$ be the number of pairs $(M,B)\in\pairs_n$ which almost occur, and write $q=\frac{\alpha}{1+\alpha}$. 
By \lemma{poly}, $X_n\leq f(n)$, and hence $\mean[q]{X_n}\leq f(n)$. Thus
\begin{align*}f(n)&\geq\sum_{(M,B)\in\pairs_n}\prob[q]{(M,B)\text{ almost occurs}}\\
&=\sum_{m\leq\alpha n}b_{n,m}q^m(1-q)^n\,.\end{align*}
Now for any $p>q$ we have
\begin{align*}\mean{X_n}&=\sum_{m\leq\alpha n}b_{n,m}p^m(1-p)^n\\
&=\sum_{m\leq\alpha n}b_{n,m}q^m(1-q)^n(p/q)^m\biggerbr{\frac{1-p}{1-q}}^n\\
&\leq\sum_{m\leq\alpha n}b_{n,m}q^m(1-q)^n.(p/q)^{\alpha n}\biggerbr{\frac{1-p}{1-q}}^n\\
&\leq\biggerbr{\frac{(1+\alpha)^{1+\alpha}}{\alpha^\alpha}p^\alpha(1-p)}^nf(n)\,.\end{align*}
The arithmetic-geometric mean inequality implies $p^\alpha(1-p)<\frac{\alpha^\alpha}{(1+\alpha)^{1+\alpha}}$. Consequently, since $f(n)$ is sub-exponential, 
$\sum_{n>0}\mean{X_n}$ is finite, and in particular there is some $n_0$ such that $\bigmean{\sum_{n\geq n_0}X_n}<1$. 

Let $\vecom$ be a site percolation instance with intensity $p$. Notice that any pair $(M,B)$ with $\abs{R_B}\geq f(n_0)$ satisfies $\abs{B}\geq n_0$ by the monotonicity of $f$. We can now deduce from the union bound that with positive probability no  
pair $(M,B)$ with $\abs{R_B}\geq f(n_0)$ almost occurs in $\omega$; call this event $E_1$. Let $E_2$ be the event that the first $f(n_0)$ vertices along $R$ are all occupied in $\vecom$; clearly $\prob{E_2}>0$. As $E_1$ and $E_2$ are determined by the states of disjoint sets of vertices, they are independent, and so their intersection also has positive probability. In the event $E_1\cap E_2$, no pair $(M,B)$ occurs in $\omega$. Thus $o$ is in an infinite cluster with positive probability, so $p\geq\pc(G)$.
\end{proof}

Following \cite{beyond}, we use the notation $\partial S$, where $S$ is a set of vertices, to denote the set of vertices
which are not in $S$ but are adjacent to some vertex in $S$, and we define the (site) Cheeger constant
\[\dot h(G)=\inf_{\abs S<\infty}\frac{\abs{\partial S}}{\abs S}\,.\]

\section{Graphs of minimum degree at least 6}\label{triang}

In this section we will consider the following problem of Angel, Benjamini and Horesh \cite{ABH}. 

\begin{problem}[{from \cite[Problem 4.2]{ABH}}]\label{pc-half} 
If $T$ is a plane triangulation with all vertex degrees at least $6$, is it necessarily true that $\pc(T)\leq 1/2$?
\end{problem}
\noindent
The authors ask also similar questions for bond percolation and self-avoiding walks. 

While \cite{ABH} does not give a precise definition of ``plane triangulation'', it is clear from context that accumulation points are not permitted. Without this assumption, the answer to the question would be negative. Indeed, consider the graph of an infinite cylinder formed by stacking congruent antiprisms, which can be embedded in the plane with a single accumulation point. Notice that all vertices have degree $6$, and furthermore this graph contains infinitely many disjoint cut sets of fixed size separating $o$ from infinity, hence $\pc=1$ by the Borel-Cantelli lemma. We remark in passing that the same graph demonstrates that \cite[Theorem 2.4]{ABH} also assumes accumulation points are not permitted. With this assumption, we firmly believe that the answer to Problem \ref{pc-half} is affirmative.

The aim of this section is to give upper bounds for $\pc$ on general plane graphs of minimum degree at least $6$ that are not necessarily triangulations. As we will see, the general case can be easily reduced to the case of triangulations. 

The following two results of Angel, Benjamini and Horesh \cite{ABH} about plane triangulations $T$ with finitely many vertices will be used in our proofs. A \textit{plane triangulation} is a plane graph without accumulation points in which every bounded face has degree $3$. The \textit{boundary} of $T$ is the set of vertices incident with the unbounded face. The remaining vertices of $T$ are called \textit{internal}. The \textit{total boundary length} of $T$ counts all edges induced by the boundary vertices of $T$ exactly once, except for those not incident with a triangular face of $T$ which are counted twice. When the boundary vertices of $T$ span a cycle, we will say that its boundary is \textit{simple} and $T$ is a \textit{disc triangulation}. 

\begin{lem}\label{iso-layer}Let $T$ be a disc triangulation with a simple boundary of length $n$ and at least one internal vertex. 
Let $T'$ be the triangulation induced by the internal vertices of $T$ and let $m$ be the total boundary length of $T'$.
Suppose all internal vertices of $T$ have degree at least $6$. Then $m\leq n-6$.\end{lem}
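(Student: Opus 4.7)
The plan is to combine Euler's formula for $T$, the minimum-degree hypothesis on internal vertices, and a case split on the existence of a ``$BB$-chord'' handled by induction on $|V(T)|$. Throughout, write $V_I, V_B$ for the internal and boundary vertex sets with $|V_B|=n$, partition the edges of $T$ into classes $e_{BB}, e_{BI}, e_{II}$ by endpoint types, and partition the bounded triangular faces into classes with counts $c, b, a, t$ according to whether they contain $3, 2, 1, 0$ vertices of $V_B$. Every bounded face is a triangle and the outer face is bounded by an $n$-cycle, so Euler's formula yields $E = 3V_I + 2n - 3$. Combined with the degree hypothesis $2e_{II} + e_{BI} \ge 6V_I$ and the trivial lower bound $e_{BB} \ge n$ coming from the boundary cycle itself, this gives the key inequality
\[
e_{BI} \le 2n - 6.
\]

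Next, $m$ can be identified combinatorially: each $IIB$-triangle contains a unique $II$-edge, and each $II$-edge contributes to the outer-face boundary walk of $T'$ with multiplicity equal to the number of $IIB$-triangles it lies on, so $m = a$. A companion handshake between $BI$-edges and their two incident triangles yields $a + b = e_{BI}$. In the base case where $T$ has no $BB$-edge off the boundary cycle, $e_{BB} = n$, and no $BBB$-triangle can exist: its three $BB$-sides would all have to lie on the boundary cycle, forcing $n = 3$, which is incompatible with the existence of an internal vertex of degree at least $6$ (a quick degree-sum check rules $n = 3$ out). Hence $c = 0$, the identity $b + 3c = 2e_{BB} - n$ collapses to $b = n$, and combining with the key inequality gives $a = e_{BI} - b \le (2n - 6) - n = n - 6$, as required.

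In the remaining case, any $BB$-chord $uv$ partitions the disc into two strictly smaller disc triangulations $T_1, T_2$ of simple boundary lengths $n_1 + 1, n_2 + 1$ with $n_1 + n_2 = n$ and $n_1, n_2 \ge 2$. Internal vertices of $T$ landing in $T_i$ keep all their $T$-neighbors on that side of the chord, so their $T_i$-degree agrees with their $T$-degree and is still at least $6$; strong induction on $|V(T)|$ therefore applies to any $T_i$ that still has an internal vertex, while any $T_i$ with none contributes $m_i = 0$ trivially. Removing $u, v$ splits $T'$ in the plane as a disjoint union of $T_1'$ and $T_2'$, so the boundary walk of $T'$ decomposes as $m = m_1 + m_2$, and the inductive bounds give $m \le n - 10$ or $m \le n - 7$ according to the subcase---both comfortably within $n - 6$. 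The main obstacle I expect is the planar-topology bookkeeping in the chord case, verifying cleanly that the outer-face boundary walk of $T'$ really does decompose as $m_1 + m_2$ once the chord endpoints are removed, and confirming that the minimum-degree hypothesis together with $V_I \ge 1$ genuinely excludes the small-$n$ degeneracies that would otherwise break the induction.
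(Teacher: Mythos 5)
Your proof is correct, but note that the paper does not actually prove this lemma: it is imported verbatim from Angel, Benjamini and Horesh \cite{ABH}, so there is no in-paper argument to compare against. Your argument is a clean, self-contained substitute in the same combinatorial spirit. The skeleton is sound: Euler's formula for a disc triangulation gives $E=3\abs{V_I}+2n-3$, which with $\sum_{v\in V_I}\deg(v)=2e_{II}+e_{BI}\geq 6\abs{V_I}$ and $e_{BB}\geq n$ yields $e_{BI}\leq 4n-6-2e_{BB}\leq 2n-6$; the incidence counts $e_{BI}=a+b$ and $b+3c=2e_{BB}-n$ are right; in the chordless case $c=0$ (three boundary-cycle edges cannot form a triangle unless $n=3$, which $e_{BI}\leq 2n-6=0$ rules out for connected $T$ with an internal vertex), so $b=n$ and $a\leq n-6$; and the chord case splits correctly into disc triangulations of boundary lengths $n_1+1,n_2+1$ with $n_1,n_2\geq 2$, whose internal vertices retain their degrees, giving $m\leq n-10$ or $m\leq n-7$ by strong induction. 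Two points deserve care. First, your identification $m=a$ is the crux, and it is valid for the \emph{outer-face boundary walk} of $T'$: every bounded face of $T'$ is an $III$-face of $T$ (no vertex of $T$ can sit inside a bounded face of $T'$, since the boundary cycle of $T$ lies in the closure of the outer face of $T'$), so each side of an $II$-edge meets the outer face of $T'$ exactly when the $T$-triangle on that side is $IIB$, and each $IIB$-triangle carries exactly one $II$-edge. Be aware that this reading differs from the \emph{literal} wording of the definition quoted in this paper (``all edges induced by the boundary vertices\dots''), under which an interior chord of $T'$ incident with two $III$-triangles would be counted once and $m=a$ would fail (e.g.\ four internal vertices of degree $6$ forming two triangles sharing an edge give $n=10$, outer walk length $4=n-6$, but five induced edges); the outer-walk interpretation is the one ABH intend and the only one under which the lemma is true, so you should state explicitly that this is the notion of total boundary length you use. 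Second, a cosmetic slip: $u,v$ are boundary vertices of $T$ and hence never belong to $T'$, so nothing is ``removed'' in the chord case --- $T'$ simply decomposes as the disjoint union $T_1'\sqcup T_2'$, each lying in the outer face of the other, whence $m=m_1+m_2$ as you claim.
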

\begin{lem}\label{iso-volume}Any disc triangulation with $k$ vertices and $n$ boundary vertices, and with all internal vertices
having degree at least $6$, satisfies $k\leq\floor{n^2/12+n/2+1}$.
\end{lem}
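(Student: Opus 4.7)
The plan is to proceed by induction on the number of internal vertices $k-n$, using \lemma{iso-layer} to peel off a boundary layer at each step.

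For the base case $k=n$, there are no internal vertices, and the required inequality $n\leq\floor{n^2/12+n/2+1}$ reduces to $n^2-6n+12\geq 0$, which is immediate since the discriminant $36-48$ is negative.

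For the inductive step, I let $T'$ be the triangulation induced by the internal vertices of $T$. By \lemma{iso-layer}, the total boundary length $m$ of $T'$ satisfies $m\leq n-6$. Applying the inductive hypothesis to $T'$ gives $\abs{V(T')}=k-n\leq m^2/12+m/2+1$. Using $m\leq n-6$ together with the monotonicity of $x\mapsto x^2/12+x/2+1$ on $x\geq -3$ and the routine identity
\[
\frac{(n-6)^2}{12}+\frac{n-6}{2}+1=\frac{n^2}{12}-\frac{n}{2}+1,
\]
I conclude $k\leq n+(n^2/12-n/2+1)=n^2/12+n/2+1$, and the floor costs nothing since $k$ is an integer.

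The main obstacle is that $T'$ is not in general a disc triangulation: its boundary may fail to be a simple cycle, and $T'$ can be disconnected or contain isolated vertices or tree-like protrusions. To carry out the induction cleanly I would prove a strengthened version of both lemmas valid for arbitrary plane triangulations of internal degree at least $6$, with \emph{total boundary length} playing the role of $n$; \lemma{iso-volume} as stated is then recovered by a single application of \lemma{iso-layer} to the original $T$ (peeling a disc triangulation with simple boundary of length $n$ produces a $T'$ whose total boundary length is bounded by $n-6$, after which the strengthened form takes over). Verifying the base case in this broader class---where total boundary length need not dominate the number of boundary vertices, for example when $T'$ is a tree or contains isolated vertices---and pushing \lemma{iso-layer} through the broader setting are the technical points; a discrete Gauss--Bonnet computation from Euler's formula should handle both.
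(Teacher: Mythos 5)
First, a point of comparison: the paper does not prove this lemma at all --- both \lemma{iso-layer} and \lemma{iso-volume} are imported verbatim from Angel, Benjamini and Horesh \cite{ABH}, so there is no in-paper argument to measure you against. Your layer-peeling induction is the natural route (and is essentially the strategy of \cite{ABH}), and the arithmetic is right: the base case reduces to $n^2-6n+12\geq0$, and the identity $(n-6)^2/12+(n-6)/2+1=n^2/12-n/2+1$ makes the inductive step close up, so that one only pays the additive $1$ once.

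However, the strengthened statement you propose to prove --- the volume bound for \emph{arbitrary} plane triangulations with total boundary length in place of $n$ --- is false, and it fails in precisely the situation the induction must handle, namely a disconnected inner layer. Take the $12$-cycle $1,\dots,12$ with the chord $\{1,7\}$, and two internal vertices $u$ (joined to $1,\dots,7$) and $v$ (joined to $7,8,\dots,12,1$). This is a disc triangulation with simple boundary of length $12$ and both internal degrees equal to $7$, yet $T'$ consists of two isolated, non-adjacent vertices: its total boundary length is $0$ while $\abs{V(T')}=2>\floor{0^2/12+0/2+1}=1$. So no amount of base-case verification in the ``broader class'' will rescue the statement as you have formulated it; the additive constant $1$ must be paid once per connected component, and with $j$ components the naive induction overshoots by $j-1$. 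The repair is to argue per component of $T'$ \emph{and} to use a sharper form of the layer lemma in which each component of the inner layer forces a boundary deficit of $6$ (so total inner boundary length at most $n-6j$); the quadratic then absorbs the surplus $j-1$. That is indeed a discrete Gauss--Bonnet computation of the kind you gesture at, but it is a genuine strengthening of \lemma{iso-layer} as quoted, not a routine verification, so as written your proof has a gap at exactly this step.
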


For the remainder of this section, we fix some plane graph $G$ of minimum degree at least $6$. For any pair $(M,B)\in\pairs_n$, consider the subgraph of $G$ induced by $B$ and the finite components of $G\setminus B$. Write $B^\circ$ for the set of vertices in $B$ which were adjacent to the infinite component of $G\setminus B$; note that $\abs{B^\circ}\leq\abs{B}=n$. By adding edges, if necessary, to the subgraph we may obtain a plane triangulation $T=T(M,B)$ with finitely many vertices, say $k$, and boundary $B^\circ$. We do this by first adding edges joining vertices of $B^\circ$ cyclically so that no other vertices meet the unbounded face, then adding internal edges to triangulate bounded faces of degree greater than $3$. 
Each time that we triangulate a face between $B$ and $M$, we do so by adding all diagonals meeting some vertex of $B$; this will ensure that in the final triangulation the subgraph induced by $B$ is connected, and every vertex of $M$ is adjacent to $B$.
Let us fix such a triangulation $T$. Since $M$ is connected, $T$ must be a disc triangulation, and the choice of $B^\circ$ ensures that all internal vertices have degree at least $6$. The next corollary follows now from \lemma{iso-volume}, noting that the number of internal vertices is at most $k-n$.
\begin{cor}\label{volume}For each pair $(M,B)\in\pairs_n$, the component of $o$ in $G\setminus B$ consists of at most $\bound$ vertices.\end{cor}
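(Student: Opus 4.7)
The plan is to apply the volume bound of \lemma{iso-volume} to the auxiliary disc triangulation $T = T(M,B)$ constructed immediately before the corollary. Write $k$ for the number of vertices of $T$. By construction $V(T)$ is the disjoint union of $B$ with the vertex sets of the finite components of $G\setminus B$, so the total number of vertices lying in finite components of $G\setminus B$ is exactly $k-n$. Since $B$ is a vertex cut separating $o$ from infinity, the component of $o$ in $G\setminus B$ is one such finite component, and its size is at most $k-n$; it therefore suffices to bound $k$.

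To apply \lemma{iso-volume}, I would first verify its hypotheses for $T$: it is by construction a disc triangulation with $|B^\circ|$ boundary vertices, and every internal vertex has degree at least $6$. The degree condition follows because every internal vertex of $T$ is a vertex of $G$, hence has degree at least $6$ in $G$; moreover, every neighbour it has in $G$ also lies in $V(T)$ (by the case analysis of whether the vertex is in $B\setminus B^\circ$ or in a finite component of $G\setminus B$), and the construction of $T$ only adds edges to the induced subgraph, so its degree cannot decrease. The lemma then yields $k \le \lfloor |B^\circ|^2/12 + |B^\circ|/2 + 1\rfloor$, and since $B^\circ \subseteq B$ and this bound is monotonically increasing in $|B^\circ|$, we obtain $k \le \lfloor n^2/12 + n/2 + 1\rfloor$. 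Subtracting $n$ then gives the desired bound of $\bound$.

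There is no real obstacle here: the corollary is essentially bookkeeping once the triangulation $T$ has been set up in the paragraph above it. The only items that deserve a little care are verifying the degree condition on internal vertices of $T$, and noting the monotonicity in $|B^\circ|$ which allows one to replace $|B^\circ|$ by $|B|=n$ on the right-hand side of \lemma{iso-volume}.
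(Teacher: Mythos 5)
Your proposal is correct and follows the paper's intended argument exactly: apply \lemma{iso-volume} to the fixed triangulation $T(M,B)$ with boundary $B^\circ$, use $\abs{B^\circ}\leq n$ together with monotonicity of the bound, and subtract the $n$ vertices of $B$ to bound the component of $o$. Your explicit verification of the internal degree condition is a welcome elaboration of what the paper merely asserts, but it is the same proof.
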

We wish to apply \lemma{iso-layer} to bound $\abs M$ in terms of $\abs B$, for $(M,B)\in\pairs$. However, the boundary of $T(M,B)$ does not necessarily coincide with $B$; to deal with this we show that the triangulation can be modified to give a disc triangulation with boundary which is not too much larger than $\abs{B}$. First we need a simple application of Euler's formula.
\begin{lem}\label{boundary}Consider a pair $(M,B)\in\pairs$ and the corresponding triangulation $T=T(M,B)$. Let $H$ be the subgraph of $T$ induced by $B$, and $f$ a face of $H$. Write $\partial f$ for the boundary of $f$,
and write $\partial^\circ f\subseteq\partial f$ for that part of the boundary which forms a cycle separating $f$ from infinity. Follow $\partial f$ clockwise, writing down a list of vertices visited (so the same vertex can appear in the list multiple times). The length of the list so obtained is at most $2\abs{\partial f}-\abs{\partial^\circ f}$, which is at most $2\abs{\partial f}-3$.\end{lem}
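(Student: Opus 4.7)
My plan is to interpret the clockwise traversal of $\partial f$ as a closed walk around the face $f$ in the planar graph $H$, whose length equals the number of edge-face incidences of $f$. Each edge of $\partial f$ borders $f$ on either one or both of its sides, and is traversed by the walk correspondingly once or twice. The heart of the argument is to show that every edge of $\partial^\circ f$ is traversed exactly once, after which the remaining edges of $\partial f$ contribute at most $2$ each, and summing gives the first inequality.

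To establish that key claim, I would invoke the Jordan curve theorem applied to the simple cycle $\partial^\circ f$: it partitions the plane into two disjoint open regions, one containing $f$ and the other containing infinity. Any edge $e\in\partial^\circ f$ has its two sides in different regions, hence borders two distinct faces of $H$; since one of these faces must be $f$ (because $e\in\partial f$), $e$ contributes exactly $1$ to the walk around $f$. Accumulating contributions yields
\[\text{walk length}\leq\abs{\partial^\circ f}+2(\abs{\partial f}-\abs{\partial^\circ f})=2\abs{\partial f}-\abs{\partial^\circ f}.\]
The second inequality is then immediate: $\partial^\circ f$ is a simple cycle in the simple graph $H$ (inherited from the triangulation $T$), so $\abs{\partial^\circ f}\geq 3$.

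The main obstacle is setting up the topological framework carefully. One must verify that the boundary traversal of $f$ is indeed a well-defined single closed walk visiting each boundary edge either once or twice; this is the promised simple application of Euler's formula, which together with the connectedness of $H$ (arranged in the construction of $T$) implies that every face of a connected planar graph has a combinatorially well-behaved boundary walk. One also needs $\partial^\circ f$ to be a genuine simple cycle rather than a more complicated closed walk; this is automatic for bounded $f$ since a simply connected planar face has its outer topological boundary traced by a Jordan curve and hence by a simple cycle in $H$.
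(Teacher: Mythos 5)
Your counting of edge--face incidences is sound as far as it goes: each edge of $\partial^\circ f$ is incident with $f$ on exactly one side (the Jordan curve argument is fine), and every other boundary edge contributes at most $2$. But the quantity you end up with is $2e(\partial f)-e(\partial^\circ f)$, where $e(\cdot)$ counts \emph{edges}, whereas the lemma's bound $2\abs{\partial f}-\abs{\partial^\circ f}$ counts \emph{vertices}: in the application (\lemma{unzip}) one takes $\partial f=B$ and reads $\abs{\partial f}=\abs{B}=n$, and \theorem{main} needs the bound in terms of $\abs B$. For the cycle $\partial^\circ f$ the two counts agree, but for $\partial f$ (a connected subgraph) we have $e(\partial f)=\abs{\partial f}-1+c$ with $c$ its cycle rank, so the counts agree only when $\partial^\circ f$ is the \emph{only} cycle in $\partial f$. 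That need not hold: the boundary of $f$ can contain further cycles enclosing regions of $H$ that poke into $f$ --- for instance triangles spanned by vertices of $B$ created when faces between $B$ and $M$ are triangulated towards $B$. Each extra independent cycle makes your bound overshoot the claimed one by $2$. (Those edges are in fact traversed only once, not twice, so the statement survives, but recovering it requires bookkeeping your argument does not do.)

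The paper avoids this by a different count: it cones the face off, adding a vertex inside $f$ joined to every entry of the list, and applies Euler's formula to the resulting plane multigraph on $\abs{\partial f}+1$ vertices. The list length is the number of triangles at the new vertex, hence at most the total number $k$ of internal faces, and the face-degree inequality $3k+\abs{\partial^\circ f}\leq 2e(H')$ combined with Euler gives $k\leq 2\abs{\partial f}-\abs{\partial^\circ f}$ directly in terms of the vertex count; any extra regions enclosed by $\partial f$ are simply additional internal faces of degree at least $3$ and are absorbed into $k$. To repair your approach you would have to show that each extra independent cycle of $\partial f$ reduces your incidence count by at least as much as it inflates $e(\partial f)-\abs{\partial f}$, which in effect reproduces that Euler computation.
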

\begin{proof}
We may assume that $f$ is an internal face. Add a new vertex inside $f$, and join it to each vertex in the list in turn. This gives a plane multigraph $H'$ with $\abs{\partial f}+1$ vertices in which each face incident with 
the new vertex has degree $3$ and, since $H$ was simple, all other faces have degree at least $3$. The unbounded face has degree 
$\abs{\partial^\circ f}$, but there may be other faces inherited from $H$. 

Suppose there are $k$ internal faces. Then Euler's formula gives $e(H')-(k+1)=\abs{\partial f}-1$. Since each edge is incident with two faces, the sum of face degrees coincides with $2e(H')$. Hence $3k+\abs{\partial^\circ f}\leq 2e(H')$, so $k\leq 2\abs{\partial f}-\abs{\partial^\circ f}$.
Since the length of the list is the number of faces incident with the new vertex, which is at most $k$,
the result follows.\end{proof}
We are now ready to bound $\abs M$ in terms of $\abs B$ and $\abs{B^\circ}$.
\begin{lem}\label{unzip}For each pair $(M,B)\in\pairs_n$ we have $\abs{M}\leq 2n-\abs{B^\circ}$.\end{lem}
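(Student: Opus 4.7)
The plan is to ``cut open'' the triangulation $T$ along a suitable subgraph of $H = T[B]$ to produce a new disc triangulation $\hat T$ whose simple outer boundary has length at most $2n - \abs{B^\circ}$ and in which every vertex of $M$ is still internal and adjacent to the boundary. Then \lemma{iso-layer}, applied to $\hat T$, will bound $\abs{M}$ from above by the desired quantity, with room to spare.

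To construct $\hat T$, I would exploit the fact that $H$ is connected by the construction of $T$: the Hamiltonian path obtained from $B^\circ$ by deleting a single edge extends to a spanning tree $\tau$ of $H$, and I would let $\tau^\bullet$ denote the $n - \abs{B^\circ}$ edges of $\tau$ that do not lie on $B^\circ$. These edges form a forest each of whose components is attached to $B^\circ$. I would then cut $T$ along $\tau^\bullet$: this standard planar ``unfolding'' operation duplicates each cut edge and splits each incident vertex of $B$ into the appropriate number of boundary copies, and because every component of $\tau^\bullet$ is rooted on $B^\circ$, it does not disconnect the disc. The result $\hat T$ is a disc triangulation whose internal faces are the inherited triangles of $T$ and whose outer boundary is a simple cycle of length $\abs{B^\circ} + 2\abs{\tau^\bullet} = 2n - \abs{B^\circ}$. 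Since cutting touches only edges within $B$, every internal vertex of $\hat T$ has its $T$-degree preserved, hence is at least $6$, and all of $M$ lies among the internal vertices.

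Assuming $M$ is non-empty (otherwise the claim is trivial), applying \lemma{iso-layer} to $\hat T$ yields that the subgraph $\hat T'$ induced by the internal vertices of $\hat T$ has total boundary length at most $2n - \abs{B^\circ} - 6$. Each $m \in M$ is adjacent in $T$ to some vertex of $B$ (recall that the construction of $T$ ensures this), which has a copy on the boundary of $\hat T$, so $m$ lies on the outer face of $\hat T'$. The number of vertices on that outer face is bounded by the length of the walk around it, which coincides with the total boundary length, giving $\abs{M} \leq 2n - \abs{B^\circ} - 6 \leq 2n - \abs{B^\circ}$.

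The main technical obstacle I anticipate is giving a clean, rigorous description of the cutting operation that certifies $\hat T$ as a disc triangulation with simple outer boundary of exactly the claimed length. One convenient way is to induct on $\abs{\tau^\bullet}$, performing one cut at a time and checking that each cut enlarges the boundary by $2$ while keeping the interior triangulated and the global structure a disc. Everything else in the argument is a direct book-keeping consequence of \lemma{iso-layer} and the construction of $T$.
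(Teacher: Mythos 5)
Your construction is sound and reaches the right count, but it is a genuinely different route from the paper's. The paper does not cut along a spanning forest of $T[B]$: it traverses the face $f$ of $T\setminus C_o$ whose boundary is $B$, records the cyclic list of visits to vertices of $B$, and splits each vertex of $B$ into one copy per visit; the length of that list is then controlled by the separate Euler-formula computation of \lemma{boundary}, which gives the bound $2\abs{\partial f}-\abs{\partial^\circ f}\le 2n-\abs{B^\circ}$. Your spanning-tree argument replaces that entire mechanism: since $T[B]$ is connected and contains the cycle on $B^\circ$, the forest $\tau^\bullet$ has exactly $n-\abs{B^\circ}$ edges, each component of it meets $B^\circ$ in exactly one vertex (two such vertices would close a cycle in $\tau$), and cutting along it opens the disc into one with simple boundary of length exactly $\abs{B^\circ}+2(n-\abs{B^\circ})=2n-\abs{B^\circ}$. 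This buys you a cleaner count and makes \lemma{boundary} unnecessary for this lemma (though the paper still needs it in the non-amenable refinement, where the same quantity is reused). What the paper's version buys in exchange is that it discards everything outside $f$ before invoking \lemma{iso-layer}, so the interior of its unzipped disc is exactly the connected set $C_o$. In your version the interior of $\hat T$ is all of $V(T)\setminus B$, which may contain finite components of $G\setminus B$ other than the one containing $o$, so $\hat T'$ need not be connected; \lemma{iso-layer} is stated for the triangulation induced by the internal vertices without addressing this, and elsewhere the paper decomposes into clusters rather than applying it to a disconnected interior. You should either delete those extra components before cutting (which only shortens the boundary) or restrict the final step to the component of $\hat T'$ containing $M$. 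With that adjustment, and noting the degenerate case where $\hat T'$ is a single vertex (where ``number of boundary vertices $\le$ total boundary length'' fails but the stated inequality holds trivially since $n\ge 6$), your argument is correct.
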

\begin{proof}Fix such a pair, and let $T=T(M,B)$ be the plane triangulation fixed above. Recall that $T$ is formed in such a way that 
$B$ is internally connected, and every vertex of $M$ is adjacent to $B$.
Let $C_o$ be the component containing $o$ of $T\setminus B$.  
Removing the vertices of $C_o$ would leave a face $f$ with boundary vertices $B$.

We define an ``unzipping'' operation on $B$ as follows. We imagine that each edge of $B$ has positive width so that each such edge has two edge-sides, where each of them is incident with exactly one face. Moreover, each edge-side has two ends reaching the endvertices of the corresponding edge. Proceed clockwise around the boundary of $f$ along the edge-sides incident with $f$, recording the ends of edges-sides of $f$ which are crossed in a cyclic ordering. Group these edge-ends by the vertex in $B$ which they reach; since $f$ is a face of $T\setminus C_o$, no edge of $T$ inside $f$ connects two vertices of $B$. 

Note that, since $T$ is a triangulation, 
every time a vertex of $B$ is encountered when proceeding around $\partial f$ clockwise, at least one edge-end of $T$ incident with that vertex is crossed. 
Thus the number of groups in the cyclic ordering of edge-ends is precisely the number of entries in the list constructed in \lemma{boundary};
since $\abs B=n$, this list has at most $2n-\abs{B^\circ}$ entries.

We now ``unzip'' $B$ by replacing vertices in $B$ by the entries of the list, so that each vertex which appears more than once
in the list is split into multiple vertices distinguished by list position. We also replace edges in $\partial f$ by edges between 
consecutive entries in the list; this means that any edges of $\partial f$ which were surrounded by $f$ will also be split into two.

There is a one-to-one correspondence between groups of edge-ends of $T$ inside $f$ and entries in the list; we use this correspondence
to replace every edge between $C_o$ and $B$ by an edge between $C_o$ and a specific list entry. This will ensure that the
graph obtained is still plane, and every vertex of $T$ inside $f$ retains its original degree. Finally, remove all
vertices and edges which lie completely outside $f$.
\begin{figure}
\begin{center}
\includegraphics[width=.35\textwidth]{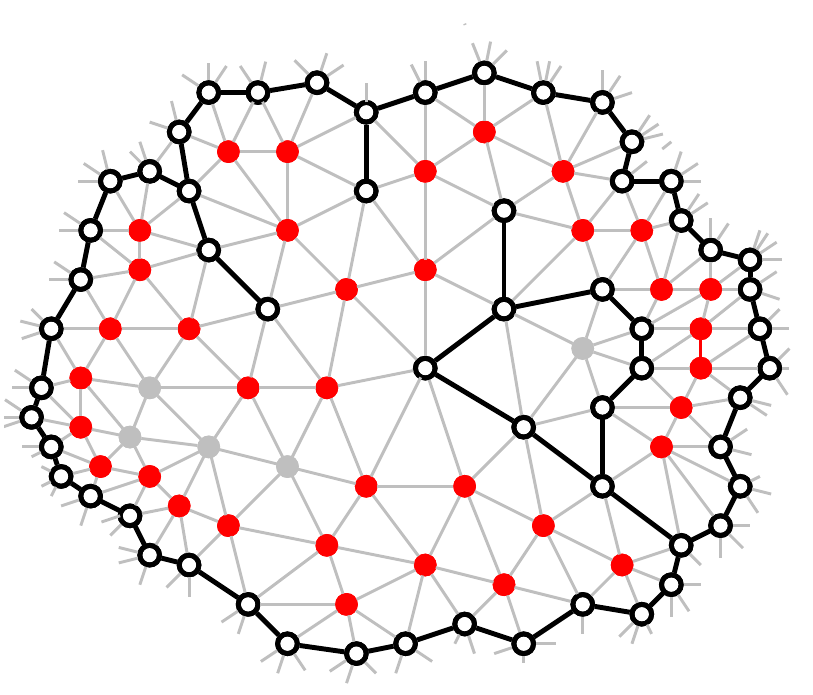}\hspace{.10\textwidth}\includegraphics[width=.35\textwidth]{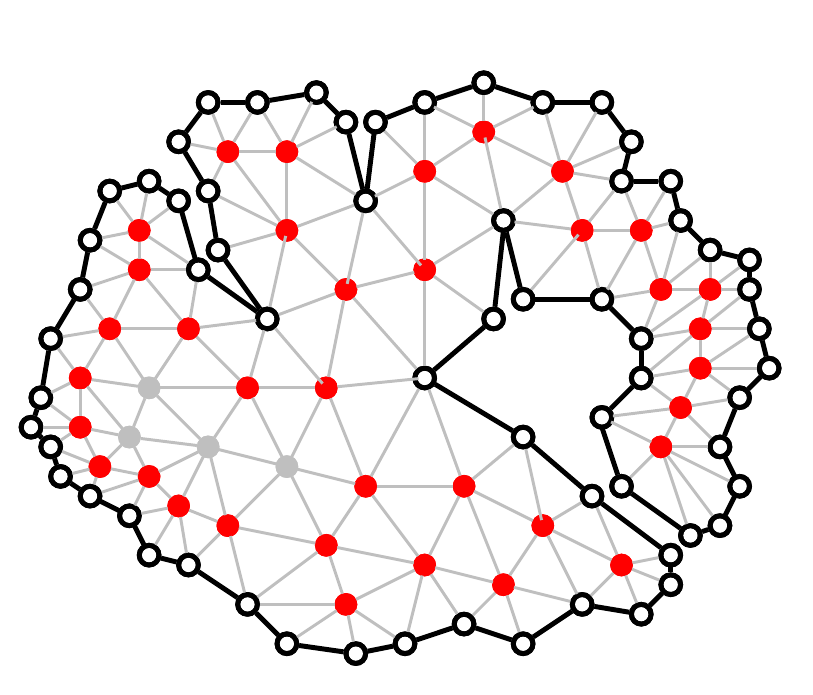}
\end{center}
\caption{Unzipping an outer boundary $B$ (bold vertices and edges); $M$ is shown in red.}\label{fig}
\end{figure}
Figure~\ref{fig} illustrates this unzipping operation.

This produces a disc triangulation with all internal vertices having degree $6$. 
The simple boundary has at most $2n-\abs{B^\circ}$ vertices, and $M$ is precisely the boundary of the internal vertices. 
The required bound now follows from \lemma{iso-layer}.\end{proof}

We now have all the ingredients required for \theorem{first}.
\begin{proof}[Proof of \theorem{first}]
We apply \theorem{main}; by \lemma{unzip} we have \eqref{ratio-condition} with $\alpha=2$, and by \corollary{volume} 
we have \eqref{poly-condition} with $f(n)=\bound$.
\end{proof}

If we further assume that $G$ is non-amenable, we obtain better bounds. 
We say $G$ is \textit{non-amenable} if $\dot h(G)>0$. 
\begin{thm}Let $G$ be a non-amenable plane graph of minimum degree at least $6$, and set $\beta=\dot h(G)>0$. Then $\pc(G)\leq\frac{2+\beta}{3+3\beta}$.\end{thm}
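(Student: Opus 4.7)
The plan is to apply \theorem{main}: condition \eqref{poly-condition} is supplied by \corollary{volume} with $f(n)=\bound$, so the task reduces to strengthening the bound $\abs M\leq 2\abs B-\abs{B^\circ}$ of \lemma{unzip} using non-amenability. Concretely, I aim to establish that $\abs M\leq\frac{2+\beta}{1+2\beta}\abs B$ for every pair $(M,B)\in\pairs$; an elementary check shows that \theorem{main} with $\alpha=\frac{2+\beta}{1+2\beta}$ then yields $\pc(G)\leq\alpha/(1+\alpha)=\frac{2+\beta}{3+3\beta}$.

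The first step is to observe that $B^\circ$ itself is a vertex cut separating $o$ from infinity. Indeed, along any infinite path from $o$ the last vertex of $B$ traversed must be adjacent to the infinite component of $G\setminus B$, and so belongs to $B^\circ$. Let $U^\star$ denote the union of all finite components of $G\setminus B^\circ$. The key observation is that $U^\star$ is larger than just the component of $o$: not only does it contain the latter (and hence $M$, since $M\subseteq C_o$), but it also contains every vertex of $B\setminus B^\circ$, because the same last-crossing argument shows that any infinite path starting from such a vertex must meet $B^\circ$. Since $M$ and $B\setminus B^\circ$ are disjoint subsets of $U^\star$, this gives $\abs{U^\star}\geq\abs M+\abs B-\abs{B^\circ}$.

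Non-amenability applied to the finite set $U^\star$ then yields $\abs{B^\circ}\geq\abs{\partial U^\star}\geq\beta\abs{U^\star}$, using that $\partial U^\star\subseteq B^\circ$ (a vertex in the infinite component of $G\setminus B^\circ$ cannot be adjacent to $U^\star$). Rearranging produces $\abs{B^\circ}\geq\frac{\beta(\abs M+\abs B)}{1+\beta}$, which on substitution into \lemma{unzip} and a short algebraic manipulation gives $(1+2\beta)\abs M\leq(2+\beta)\abs B$, as required. I expect the main subtle point to be the inclusion $B\setminus B^\circ\subseteq U^\star$: without enlarging the set to which non-amenability is applied beyond $M$ itself, this method only yields the weaker ratio $\alpha=2/(1+\beta)$, which is suboptimal when $\beta<1$.
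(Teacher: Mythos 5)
Your proposal is correct and follows essentially the same route as the paper: identify the minimal-cut part $B^\circ$ of $B$, apply non-amenability to the full finite region enclosed by $B^\circ$ (crucially including $B\setminus B^\circ$ as well as $M$) to get $\abs{B^\circ}\geq\frac{\beta}{1+\beta}(\abs M+\abs B)$, and combine this with the unzipping bound $\abs M\leq 2\abs B-\abs{B^\circ}$ to obtain $\alpha=\frac{2+\beta}{1+2\beta}$ in \theorem{main}. The subtle point you flag --- that applying the Cheeger inequality to $M$ alone gives only the weaker $\alpha=2/(1+\beta)$ --- is exactly the observation the paper makes via the inclusion $A\supseteq M\cup(B\setminus B^\circ)$.
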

\begin{rem}This result is only interesting for $0<\beta<1$, since for $\beta\geq 1$ it is weaker than the bound of $\frac{1}{1+\beta}$,
due to Benjamini and Schramm \cite{beyond}, which applies to general graphs. 
This is as we would expect, since for $\beta\geq 1$ a stronger form of \lemma{iso-layer} may be deduced directly from the Cheeger constant.\end{rem}
\begin{proof}
Consider some $(M,B)\in\pairs_n$ and the corresponding triangulation $T$. Let $B^\circ\subseteq B$ be the minimum vertex cut separating $o$ from infinity. Note that $B^\circ$ is the boundary of some set $A$ consisting of those vertices of $T$ separated from infinity by $B^\circ$. In particular, $A\supseteq M\cup(B\setminus B^\circ)$. 
Consequently $\abs A\geq\abs M+n-\abs{B^\circ}$, giving
\[\abs{B^\circ}\geq\beta(\abs M+n-\abs{B^\circ}),\]
\ie
\begin{equation}\abs{B^\circ}\geq\frac{\beta}{1+\beta}(\abs M+n)\,.\label{smallalpha}\end{equation}

Unzipping $B$ as in the proof of \lemma{unzip} gives a disc triangulation with boundary of length at most $2n-\abs{B^\circ}$ by \lemma{boundary}, 
and so applying \lemma{iso-layer} and \eqref{smallalpha} gives $\abs M\leq 2n-\frac{\beta}{1+\beta}(\abs{M}+n)$, 
\ie $\abs M\leq\bigbr{\frac{2+\beta}{1+2\beta}}n$. Applying \theorem{main} with $\alpha=\frac{2+\beta}{1+2\beta}$ gives the required result.
\end{proof}

We remark that Angel, Benjamini and Horesh \cite{ABH} proved that for any $r>0$, if a plane triangulation $T$ has minimum degree at least $6$ and every ball of radius $r$ contains a vertex of degree greater than $6$, then $T$ is non-amenable. 

\section{Hyperbolic graphs}\label{hyper}
In this section we consider more stringent degree conditions, motivated by regular hyperbolic graphs. Benjamini and Schramm \cite{beyond} conjecture
that a plane graph $G$ of minimum degree at least $7$ has $\pc<1/2$, and furthermore that there are infinitely many infinite open clusters for every $p\in (\pc,1-\pc)$. We show the first half of this conjecture. In the process we give tight bounds on the vertex Cheeger constant.

Let $G$ be a plane graph with minimum degree at least $d\geq7$, fixed throughout this section.
\begin{lem}\label{hyper-layer}Fix a minimal vertex cut $X$ separating $o$ from infinity. Let $Y$ be the vertices of the finite component of $G\setminus X$ which are adjacent to $X$, and $Z$ be the other vertices of this component. Then $\abs X\geq(d-5)\abs Y+(d-6)\abs Z+5$.\end{lem}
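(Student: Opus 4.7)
The plan is to convert $H := G[X \cup Y \cup Z]$ into a plane disc triangulation $T$ with $X$ as a simple outer cycle and with no $X$--$Z$ edges, and then to double-count the edges between $X$ and $Y$ via two applications of Euler's formula, exploiting the minimum-degree condition on $Y \cup Z$. Because $X$ is minimal, every $x \in X$ has a neighbour in the infinite component of $G\setminus X$ and therefore lies on the outer face of $H$. I will enclose $H$ by a simple cycle through $X$ (using an unzipping operation in the spirit of \lemma{unzip} to handle any $x$ visited multiply by the outer face walk of $H$) and then triangulate every bounded face. Since $X$ and $Z$ are not adjacent in $H$, any bounded face whose boundary meets both sets must also contain a vertex of $Y$, so I can triangulate that face by a fan centred at such a $Y$-vertex and avoid introducing any $X$--$Z$ chord. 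In the resulting triangulation $T$, every $v\in Y\cup Z$ keeps its full $G$-degree, so $\deg_T(v)\geq d$, and every $x\in X$ has $\deg_T(x)\geq 3$ (two cyclic neighbours on the outer cycle plus at least one neighbour in $Y$).

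The disc-triangulation formula gives $|E(T)| = 3(|X|+|Y|+|Z|) - 3 - |X|$. The sub-triangulation $T' := T[Y \cup Z]$ is connected because $G[C] \subseteq T'$ is connected; its bounded faces are exactly the $X$-free triangles of $T$, so $T'$ is a connected near-triangulation with $|E(T')| = 3(|Y|+|Z|) - 3 - \ell$, where $\ell$ is the length of its outer face walk. Every $y \in Y$ is exposed to the outer face of $T'$ (its $X$-edges having been removed), so whenever $|Y|+|Z| \geq 2$ the walk visits each $y$ at least once, and $\ell \geq |Y|$. The degenerate case $|C|=1$ is handled separately: then $C = \{o\}$ has all its $G$-neighbours in $X$, so $|X|\geq \deg_G(o)\geq d=(d-5)\cdot 1+5$.

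For the double-counting step, summing $\deg_T(v)\geq d$ over $v\in Y\cup Z$ and using $\sum_{Y\cup Z}\deg_T=|E_{XY}|+2|E(T')|$ gives
\[|E_{XY}|\geq d(|Y|+|Z|)-2|E(T')|=(d-6)(|Y|+|Z|)+6+2\ell;\]
subtracting the same lower bound from the handshake identity for $T$ and using $|E_{XX}|\geq |X|$ from the outer cycle yields
\[|E_{XY}|\leq 2|X|-(d-6)(|Y|+|Z|)-6.\]
Combining these two bounds with $\ell\geq |Y|$ rearranges to $|X|\geq (d-5)|Y|+(d-6)|Z|+6$, which implies the claim.

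The main obstacle I foresee is not the algebra, which is formal, but the topological construction of $T$: in particular, handling vertices of $X$ that appear with multiplicity on the outer face walk of $H$ via unzipping, and verifying that the accounting for the resulting slack in $|X|$ is tight enough to preserve the additive constant $5$ in the bound (the simple-cycle case yields the stronger $+6$, which is where the $+1$ of slack is spent).
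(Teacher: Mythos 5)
Your argument is correct and is essentially the paper's: both proofs complete $G[X\cup Y\cup Z]$ to a disc triangulation with boundary cycle $X$ and then play Euler's formula against the handshake lemma and the degree condition on $Y\cup Z$. The only real difference is bookkeeping: the paper adds an apex vertex in the outer face and lower-bounds the number of edges between $X$ and $Y$ by counting the faces between $X$ and $Y$ (getting $\abs X+\abs Y-1$, whence the constant $5$ rather than $6$ when $\abs Y=1$), whereas you apply Euler's formula a second time to the sub-triangulation $T'=T[Y\cup Z]$ and use $\ell\geq\abs Y$; these are the same estimate in different clothing, and your separate treatment of $\abs{Y\cup Z}=1$ recovers the $+5$ exactly as the paper's $\abs Y-1$ term does.

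One warning about the obstacle you flag. Unzipping vertices of $X$ is not a safe fallback here: each split increases the boundary length of the disc triangulation by one while leaving the left-hand side $\abs X$ fixed, and by your own accounting each unit of splitting costs a unit in the final bound, so two or more splits would already lose the constant $5$. Fortunately no unzipping is needed, and this is exactly what the paper assumes when it declares that the vertices of $X$ can be made to induce a cycle bounding a disc: because $X$ is a \emph{minimal} cut, every $x\in X$ has neighbours both in the finite component $C$ and in the infinite component $D$, and planarity forces the $C$-edges and the $D$-edges at $x$ to occupy disjoint contiguous arcs of the rotation at $x$ (if they interleaved, a cycle through $x$ and $C$ would separate two vertices of $D$ that are joined by a path avoiding $C\cup\{x\}$). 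One can therefore run a simple closed curve through each vertex of $X$ exactly once, keeping $C$ inside and $D$ outside, and realise it by added edges; no vertex of $X$ needs to be split. With that in place your construction of $T$, the fan triangulation avoiding $X$--$Z$ chords, and the two Euler counts all go through as written.
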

\begin{proof}Consider the induced graph on $X\cup Y\cup Z$. We may add edges if necessary so that this is a disc triangulation with boundary $X$, and we may do this in such a way that the set of vertices in $G\setminus X$ which are adjacent to $X$ is precisely $Y$. Notice that the vertices of $X$ induce a cycle, and the graph induced by $Y\cup Z$ is a connected plane triangulation because $X$ is a minimal vertex cut.

Now add a new vertex $u$ in the unbounded face of this graph adjacent to all vertices in $X$ to obtain a new graph $H$. We now have a triangulation of a sphere with $\abs X+\abs Y+\abs Z+1$ vertices, \ie all faces, including the unbounded one, have degree $3$.
Since each edge is incident with two faces, we can easily deduce that $3F=2E$, where $F$ and $E$ denote the number of faces and edges of $H$. Using Euler's formula we now obtain that $E=3(\abs X+\abs Y+\abs Z)-3$.

Look at faces between $X$ and $Y$ in $H$, \ie faces having two vertices from $X$ and one from $Y$ or vice versa. There are $\abs X$ of the first type (one for each edge between vertices of $X$), because the vertices of $X$ span a cycle, and at least $\abs Y-1$ of the second, because there is at least one face for each edge between vertices of $Y$, and $Y$ spans a connected graph (in fact we only need the ${}-1$ in the case $\abs Y=1$). When walking along $X$ the faces between $X$ and $Y$ appear in a cyclic order because $X$ spans a cycle, hence the number of edges between $X$ and $Y$ coincides with the number of faces between $X$ and $Y$. It follows that there are at least $\abs X+\abs Y-1$ edges between $X$ and $Y$.

Thus we can calculate the sum of the degrees $d(v)$ of $H$ to be 
\[\sum_{v\in V(H)} d(v)\geq\sum_{v\in Y\cup Z}d(v)+\abs X+\abs Y-1+4\abs X\geq5\abs X+(d+1)\abs Y+d\abs Z-1\,.\]
To see that the first inequality holds notice that the sum $\sum_{v\in X\cup\{u\}}d(v)$ counts the edges between $X$ and $Y$ only once and the remaining edges with at least one endvertex in $X$ exactly twice. Now the number of edges connecting vertices of $X$ and the number of edges between $X$ and $u$ are both equal to $\abs{X}$. In the second inequality we used that $d(v)\geq d$.
Since $\sum_{v\in V(H)} d(v)$ is equal to $2E=6(\abs X+\abs Y+\abs Z)-6$ by the handshake lemma, the result follows.
\end{proof}
In particular, we have $(d-5)\abs Y\leq \abs X-5$ and $(d-6)\abs{Y\cup Z}\leq\abs X-5$. In fact, the second inequality can be improved.
\begin{lem}\label{hyper-iso}For $X,Y,Z$ as in \lemma{hyper-layer}, we have $\alpha_d\abs{Y\cup Z}<\abs X-5$, where 
\[\alpha_d=\frac{d - 6 + \sqrt{(d-2)(d-6)}}2\,.\]
In particular, $\alpha_7\approx 1.618$ is the golden ratio.\end{lem}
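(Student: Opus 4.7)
From \lemma{hyper-layer} we already have $\abs X-5\geq(d-5)\abs Y+(d-6)\abs Z$, so the target inequality $\alpha_d\abs{Y\cup Z}<\abs X-5$ will follow once we show $(d-5)\abs Y+(d-6)\abs Z>\alpha_d(\abs Y+\abs Z)$. The algebraic input is that $\alpha_d$ is the positive root of $t^2-(d-6)t-(d-6)=0$, so $\alpha_d^2=(d-6)(\alpha_d+1)$; one also checks directly that $d-5>\alpha_d>d-6>0$ for $d\geq 7$. Rearranging the defining identity gives $(d-5-\alpha_d)\alpha_d=\alpha_d-(d-6)$, and dividing the required inequality by $d-5-\alpha_d>0$ reduces it to $\abs Y>\alpha_d\abs Z$.

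I would prove this sharpened form $\abs Y>\alpha_d\abs Z$ by strong induction on $\abs K=\abs{Y\cup Z}$, carrying the full statement of \lemma{hyper-iso} as the inductive hypothesis. The base case $Z=\emptyset$ is trivial provided $Y$ is non-empty, and indeed $Y\neq\emptyset$ whenever $K\neq\emptyset$ since any path from $o\in K$ to infinity must leave $K$ through a vertex adjacent to $X$, which by definition lies in $Y$. In the smallest nontrivial case $\abs K=1$, we have $\abs X\geq d$ by the minimum degree assumption, and $d-5>\alpha_d$ already yields the lemma without recursion.

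For the inductive step with $Z\neq\emptyset$, the crucial structural observation is that $Y$ itself is a vertex cut separating every vertex of $Z$ from infinity: a path from $v\in Z$ to infinity in $G$ must leave $K$ through $X$, and the last vertex of such a path in $K$ is necessarily in $Y$. For each connected component $Z_i$ of the induced subgraph on $Z$, let $Y'_i\subseteq Y$ consist of those vertices of $Y$ adjacent to $Z_i$, and trim $Y'_i$ down to a minimal vertex cut $X'_i$ separating some chosen vertex of $Z_i$ from infinity. Since $Y\setminus X'_i$ stays connected to $X$ and hence to infinity, the finite component of $G\setminus X'_i$ containing $Z_i$ is contained in $Z$; together with the connectedness of $Z_i$ this forces that finite component to equal $Z_i$ exactly. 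The inductive hypothesis applied to $X'_i$ then gives $\abs{X'_i}>\alpha_d\abs{Z_i}+5$.

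When $Z$ is connected (so $r=1$) this immediately yields $\abs Y\geq\abs{X'_1}>\alpha_d\abs Z+5>\alpha_d\abs Z$, completing the induction. The main obstacle is the case when $Z$ has several components $Z_1,\dots,Z_r$: a single vertex $y\in Y$ may be adjacent to several components and therefore lie in several of the cuts $X'_i$ at once, so the naive sum $\sum_i\abs{X'_i}$ may overcount $\abs Y$. I expect to resolve this via a planarity-based disjointness argument, either by locally relocating each $X'_i$ to avoid shared vertices (the additive slack of $5$ per component giving ample room to absorb any unavoidable overlaps) or by strengthening the induction to a simultaneous statement about all components of $Z$ at once.
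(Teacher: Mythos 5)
Your reduction is sound: using $\alpha_d^2=(d-6)(1+\alpha_d)$ and $\alpha_d<d-5$, the lemma does follow from \lemma{hyper-layer} once one controls the relative sizes of $Y$ and $Z$, and your inductive setup (per-component minimal cuts $X_i'\subseteq Y$ whose enclosed finite regions are exactly the components $Z_i$) is exactly the decomposition the paper uses. The base case and the connected case are fine. But the one step you explicitly defer --- accounting for the overlap of the cuts $X_1',\dots,X_r'$ inside $Y$ --- is precisely the mathematical content of the paper's proof, so as written the proposal has a genuine gap rather than a complete alternative argument. Neither of your two proposed fixes is carried out, and the first one (``locally relocating each $X_i'$ to avoid shared vertices'') is doubtful on its face: a vertex of $Y$ can genuinely be forced to lie on several of these minimal cuts simultaneously, and minimality leaves no freedom to move it.

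What is actually needed is a quantitative bound on the overcount, namely
\[\sum_{i=1}^{k}\bigl(\abs{Y_i}-2\bigr)\;\leq\;\abs Y-2\,,\]
which the paper obtains by adding edges so that each cut $Y_i$ spans a cycle, observing that the union of these cycles is an outerplanar graph on $Y$ (every vertex of $Y$ sees the unbounded face, being adjacent to $X$), and applying Euler's formula together with the fact that the outer face has degree at least $\abs Y$ and any extra faces created by cyclically overlapping cuts have degree at least $3$. Your intuition that ``the additive slack of $5$ per component gives ample room'' is correct, but only after this bound is in hand: combining it with the inductive hypothesis $\alpha_d\abs{Z_i}\leq\abs{Y_i}-5$ gives $\alpha_d\abs Z\leq\sum_i(\abs{Y_i}-2)-3k\leq\abs Y-2-3k\leq\abs Y-5$, which then feeds into \lemma{hyper-layer} in the form $\abs X-5\geq(d-6)\abs{Y\cup Z}+\abs Y$ to close the induction. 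To complete your proof you would need to supply this outerplanarity/Euler-formula estimate (or an equivalent overlap bound); without it the disconnected case is unproved.
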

\begin{proof}We prove this by induction on $\abs Z$. If $Z=\varnothing$ then the required inequality holds since $\alpha_d<d-5$. 
Otherwise, $Y$ contains $k\geq 1$ minimum vertex cuts which separate clusters of vertices in $Z$ from infinity. Write $(Y_i)_{i=1}^k$ for the cuts 
(which may overlap) and $(Z_i)_{i=1}^k$ for the clusters; by \lemma{hyper-layer} $\abs{Y_i}\geq d$ for each $i$.

We may add edges, where necessary, between vertices of $Y$ such that each of the cuts forms a cycle. The auxiliary graph $H$ consisting only of these cycles is outerplanar, \ie all vertices of $H$ belong to the unbounded face of $H$, by definition of $Y$. Notice that $H$ has $k$ faces bounded by the vertex cuts, one unbounded face, and possibly a few more faces that are formed when some vertex cuts overlap in a cyclic way. Let us denote by $\ell$ the number of the latter faces. Moreover, $H$ has $\abs Y$ vertices and its unbounded face has degree at least $\abs Y$, because $H$ is a connected graph and has at least one cycle. Since each edge of $H$ is incident with two faces, the sum of face degrees coincides with $2e(H)$. It follows that $\sum_{i=1}^k\abs{Y_i}\leq 2e(H)-\abs Y-3\ell$, and by
Euler's formula the right hand side of the inequality is equal to $\abs Y+2k-\ell-2$, implying that $\sum_{i=1}^k(\abs{Y_i}-2)\leq\abs Y-2$.

By the induction hypothesis, we have $\alpha_d\abs{Z_i}\leq\abs{Y_i}-5$, and thus 
$\alpha_d\abs Z=\alpha_d\sum_{i=1}^k\abs{Z_i}\leq\sum_{i=1}^k(\abs{Y_i}-5)\leq\abs Y-2-3k\leq\abs Y-5$.

It follows that $\abs Y>\frac{\alpha_d}{1+\alpha_d}(\abs{Y\cup Z})$, so applying \lemma{hyper-layer} we get 
\[\Bigl(d-6+\frac{\alpha_d}{1+\alpha_d}\Bigr)\abs{Y\cup Z}<\abs X-5\,,\]
and since $\alpha_d=d-6+\frac{\alpha_d}{1+\alpha_d}$ the result follows.\end{proof}

\begin{cor}\label{hyper-iso-2}Let $W$ be any finite subset of $V(G)$. Then $\abs{\partial W}\geq\alpha_d\abs{W}$.\end{cor}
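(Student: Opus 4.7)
My plan is to deduce the corollary from \lemma{hyper-iso}. First, I would reduce to the case where every component of $G\setminus W$ is infinite by replacing $W$ with $\tilde W:=W\cup\bigcup\{C:C\text{ is a finite component of }G\setminus W\}$. Any vertex of $\partial\tilde W$ lies in an infinite component of $G\setminus W$ and is adjacent to $W$ (it cannot be adjacent merely to a finite component without itself lying in that component), so $\partial\tilde W\subseteq\partial W$; combined with $|\tilde W|\geq|W|$, the bound for $\tilde W$ would imply the bound for $W$.

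Next, in the connected case, I would pick $o\in W$ and take $X\subseteq\partial W$ to be a minimal vertex cut separating $o$ from infinity; such $X$ exists because $\partial W$ is itself such a cut. Applying \lemma{hyper-iso} with this $o$ and $X$, and writing $Y\cup Z$ for the finite component of $G\setminus X$ containing $o$, we have $W\subseteq Y\cup Z$ because $W$ is connected and avoids $X$. Hence
\[\alpha_d|W|\leq\alpha_d|Y\cup Z|<|X|-5\leq|\partial W|-5,\]
which is strictly stronger than required.

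For disconnected $W$ with components $W_1,\dots,W_k$, the connected case applied to each gives $|\partial W_i|>\alpha_d|W_i|+5$, and each $\partial W_i\subseteq\partial W$ since distinct components of $W$ cannot be adjacent. The main obstacle is controlling the possible overlap of the $\partial W_i$'s. My plan is to adapt the outerplanar-auxiliary-graph argument from the proof of \lemma{hyper-iso}: pick a minimal vertex cut $X_i\subseteq\partial W_i$ for each component and form an auxiliary subgraph $H$ on $\bigcup_i X_i$, adding edges among the vertices of each $X_i$ as needed so that each is realised as a cycle. Outerplanarity of $H$ should follow from the fact that every vertex of $X_i$ has a neighbor in the infinite component of $G\setminus X_i$, placing it on the unbounded face of $H$. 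The same Euler-formula calculation as in \lemma{hyper-iso} then yields $\sum_i(|X_i|-2)\leq|\partial W|-2$, and combining with $|X_i|-2>\alpha_d|W_i|+3$ gives $|\partial W|>\alpha_d|W|+3k+2$, which suffices. The delicate step I anticipate is verifying outerplanarity when the $X_i$'s could in principle be nested in one another; I plan to handle this by selecting outermost representative cuts and iterating on what they enclose.
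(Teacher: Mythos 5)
Your proposal is correct and follows essentially the same route as the paper: normalise $W$ so that it has no finite complementary components, surround each cluster of $W$ by a minimal vertex cut contained in $\partial W$, apply \lemma{hyper-iso} to each, and control the overlap of the cuts with the same outerplanar Euler-formula count used in the proof of \lemma{hyper-iso}. The nesting worry you flag at the end disappears if you adopt the paper's slightly stronger normalisation---that every vertex of $\partial W$ meets the infinite component of $G\setminus\partial W$---since then each cut vertex is adjacent to a region lying outside all of the cycles, which forces it onto the unbounded face of the auxiliary graph without any need to iterate over outermost cuts.
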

\begin{proof}We may assume every vertex in $\partial W$ meets the infinite component of $G\setminus\partial W$, 
since otherwise we may find a larger set with smaller boundary. Now splitting $\partial W$ into minimum vertex cuts surrounding clusters of $W$ as above, 
and applying \lemma{hyper-iso} to each cut, gives the required result.\end{proof}
The following result, which shows that \lemma{hyper-iso} is tight, may be of independent interest.
\begin{thm}\label{cheeger}For each $d\geq 7$ the $d$-regular hyperbolic triangulation $H_{d,3}$ has vertex Cheeger constant $\dot h(H_{d,3})=\alpha_d$.
\end{thm}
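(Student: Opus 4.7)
The lower bound $\dot h(H_{d,3})\geq\alpha_d$ is immediate from \corollary{hyper-iso-2} applied to $G=H_{d,3}$, whose minimum degree is $d$. For the matching upper bound, the plan is to exhibit a sequence of finite sets whose boundary-to-volume ratio tends to $\alpha_d$, and the natural candidates are the balls around a fixed vertex $o$. Let $a_n=\abs{\{v\in V(H_{d,3}):d(o,v)=n\}}$ and write $B_n$ for the ball of radius $n$, so that $\abs{B_n}=\sum_{k=0}^{n}a_k$ and $\abs{\partial B_n}=a_{n+1}$. It therefore suffices to show that $a_n$ satisfies a linear recurrence whose dominant root is $1+\alpha_d$.

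The structural input is the following claim, which I would establish by induction on $n$: for each $n\geq 1$, the sphere $S_n=\{v:d(o,v)=n\}$ induces a cycle of length $a_n$, and every vertex of $S_n$ has either exactly one or exactly two neighbours in $S_{n-1}$. The base case $n=1$ is immediate since the neighbours of $o$ appear in cyclic order around $o$ with consecutive pairs forming triangular faces. For the inductive step, each edge of the cycle $S_{n-1}$ bounds a triangular face on its outer side whose apex lies in $S_n$, producing exactly $a_{n-1}$ ``corner'' vertices of $S_n$ (those with two neighbours in $S_{n-1}$), with the remaining ``non-corner'' vertices each having a unique neighbour in $S_{n-1}$; consecutive vertices along $S_n$ are joined by triangular faces, so $S_n$ is also a cycle. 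Counting edges from $S_n$ to $S_{n+1}$ in two ways now gives, for $n\geq 2$,
\[a_{n+1}=(d-4)a_n-a_{n-1},\]
since a corner vertex of $S_n$ contributes $d-4$ outward edges, a non-corner contributes $d-3$, the number of corners in $S_n$ is $a_{n-1}$, and the number of corners in $S_{n+1}$ equals $a_n$.

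With the recurrence in hand, the rest is routine linear algebra: the characteristic polynomial $x^2-(d-4)x+1$ has roots $\rho_\pm=\tfrac{(d-4)\pm\sqrt{(d-2)(d-6)}}{2}$ with $\rho_+\rho_-=1$, $\rho_+>1>\rho_->0$, and $\rho_+=1+\alpha_d$. Since $a_n$ is positive and strictly increasing, the coefficient of $\rho_+^n$ in the closed form for $a_n$ is strictly positive, so $a_n=C\rho_+^n\bigbr{1+o(1)}$ for some $C>0$. Consequently
\[\frac{\abs{\partial B_n}}{\abs{B_n}}=\frac{a_{n+1}}{\sum_{k=0}^{n}a_k}\longrightarrow\rho_+-1=\alpha_d,\]
which combined with the lower bound gives $\dot h(H_{d,3})=\alpha_d$. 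The main obstacle is the inductive structural claim that each $S_n$ is a cycle with a clean corner/non-corner classification, since this is what converts a global combinatorial statement into the clean two-term recurrence; once that is in place the eigenvalue computation and the asymptotics are short.
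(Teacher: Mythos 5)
Your proposal is correct and follows essentially the same route as the paper: the lower bound is read off from \corollary{hyper-iso-2}, and the upper bound comes from showing that the balls in $H_{d,3}$ satisfy a linear recurrence whose dominant root is $1+\alpha_d$. The only (immaterial) difference is that you derive $a_{n+1}=(d-4)a_n-a_{n-1}$ by directly double-counting edges between consecutive spheres via the corner/non-corner classification, whereas the paper obtains the equivalent recurrence $\abs{B_{n+1}}-(d-4)\abs{B_n}+\abs{B_{n-1}}=6$ by observing that \lemma{hyper-layer} holds with equality for the cut $\partial B_n$; both rest on the same structural fact that each sphere induces a cycle.
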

\begin{proof}\corollary{hyper-iso-2} immediately gives $\dot h(H_{d,3})\geq\alpha_d$, and so it suffices to exhibit a sequence
of sets $S_n$ satisfying $\abs{\partial S_n}=(\alpha_d+o(1))\abs{S_n}$. In fact the balls $B_n$ have this property. Note that,
for $n\geq 1$, $\partial B_n$ forms a minimum vertex cut, the set of vertices on the unbounded face of $B_n$ is precisely
$B_n\setminus B_{n-1}$, and the induced subgraph on this set is a cycle.

Thus, following the proof of \lemma{hyper-layer}, we obtain $\abs{\partial B_n}=(d-5)\abs{B_n\setminus B_{n-1}}+(d-6)\abs{B_{n-1}}+6$, 
or equivalently, noting that $B_n\cup\partial B_n=B_{n+1}$, 
\begin{equation}\abs{B_{n+1}}-(d-4)\abs{B_n}+\abs{B_{n-1}}=6\,.\label{recurrence}\end{equation}
Standard techniques on recurrence relations imply that the solution of \eqref{recurrence} is given by
$\abs{B_n}=a(1+\alpha_d)^n+b(1+\alpha_d)^{-n}+c$ for suitable constants $a,b,c$ (where clearly $a>0$). In particular, $\abs{\partial B_n}=\abs{B_{n+1}}-\abs{B_n}=(\alpha_d+o(1))\abs{B_n}$, as required.
\end{proof}
\lemma{hyper-iso} therefore shows that among plane graphs with minimum degree $d\geq 7$, $H_{d,3}$ minimises the vertex Cheeger constant. This fact already implies an upper bound on the critical probability, using a result of Benjamini and Schramm \cite{beyond} that $\pc(G)\leq(1+\dot h(G))^{-1}$.
However, combining these facts with our method of interfaces yields better bounds. 

\begin{thm}Let $G$ be a plane graph with minimum degree $d\geq 7$. Then \label{p_c triang}$\pc(G)\leq\frac{2+\alpha_d}{(d-3)(1+\alpha_d)}$.\end{thm}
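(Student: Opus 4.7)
The plan is to apply \theorem{main} with $\alpha := (d-4-\alpha_d)/(1+\alpha_d)$; a short calculation using the defining identity $(d-6)(1+\alpha_d)=\alpha_d^2$ rewrites $\alpha/(1+\alpha)$ as $(2+\alpha_d)/[(d-3)(1+\alpha_d)]$, matching the stated bound.

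Fix $(M,B)\in\pairs_n$, let $t:=|B^\circ|$ denote the number of vertices of $B$ adjacent to the infinite component of $G\setminus B$, and let $A$ be the union of the finite components of $G\setminus B^\circ$. Since $B^\circ$ is a minimum vertex cut separating $o$ from infinity, $\partial A = B^\circ$, and \corollary{hyper-iso-2} yields $|A|\leq t/\alpha_d\leq n/\alpha_d$. The component of $o$ in $G\setminus B$ is contained in $A$, so condition \eqref{poly-condition} of \theorem{main} holds with the linear function $f(n):=\lceil n/\alpha_d\rceil$.

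For condition \eqref{ratio-condition}, I would combine two bounds on $|M|$ in terms of $n$ and $t$. The global bound uses that $M$ and $B\setminus B^\circ$ are disjoint subsets of $A$: this gives $|M|+(n-t)\leq|A|\leq t/\alpha_d$, that is, $|M|\leq[(1+\alpha_d)t-\alpha_d n]/\alpha_d$. The local bound comes from applying the unzipping construction of \lemma{unzip} to the triangulation $T(M,B)$: this produces a disc triangulation whose boundary has at most $2n-t$ vertices, whose internal vertices retain their original degrees (hence at least $d$), and with $M$ being precisely the layer of internal vertices adjacent to the boundary. Applying \lemma{hyper-layer} to this disc triangulation yields $(d-5)|M|\leq 2n-t$. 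The worst case for $|M|$ arises at the intersection of the two bounds, and a short algebraic manipulation using the identity $(d-5)(1+\alpha_d)+\alpha_d=(1+\alpha_d)^2$ (also a consequence of the defining quadratic) produces $|M|\leq\alpha n$, which is exactly what is required.

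The main obstacle I foresee is verifying that \lemma{hyper-layer}, whose statement is phrased for minimum vertex cuts in $G$, applies to the unzipped disc triangulation. Its proof only uses the structure of a disc triangulation with internal degrees at least $d$ together with the connectedness of $Y$ (here $M$, which is connected by definition of the outer interface); so the lemma should be read as a disc-triangulation inequality, and that is what is needed here. Everything else is routine algebra and a direct appeal to \theorem{main}.
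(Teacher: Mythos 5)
Your proposal is correct and follows essentially the same route as the paper: the ``local'' bound $(d-5)\abs M\leq 2\abs B-\abs{B^\circ}$ via unzipping plus \lemma{hyper-layer}, the ``global'' bound via the isoperimetric inequality applied to the cut $B^\circ$ (the paper cites \lemma{hyper-iso} directly, you use \corollary{hyper-iso-2}, which is equivalent here), and then the same linear combination fed into \theorem{main}; your $\alpha=(d-4-\alpha_d)/(1+\alpha_d)$ agrees with the paper's $\frac{2+\alpha_d}{d-5+(d-4)\alpha_d}$ by the defining quadratic. The only cosmetic difference is that you verify condition \eqref{poly-condition} with the linear $f(n)=\lceil n/\alpha_d\rceil$ from the isoperimetric inequality rather than the quadratic bound of \corollary{volume}, and your observation that \lemma{hyper-layer} is really a disc-triangulation inequality is exactly the reading the paper uses implicitly.
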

\begin{rem}For $d=7$, $(1+\alpha_d)^{-1}\approx 0.3820$, whereas $\frac{2+\alpha_d}{(d-3)(1+\alpha_d)}\approx0.3455$.\end{rem}
\begin{proof}Let $M,B$ be an outer interface and its boundary, let $B^\circ$ be the minimal vertex cut part of $B$, and fix a triangulation $T(M,B)$ as described in Section \ref{triang}. We can unzip $B$ as in \lemma{unzip} and then apply \lemma{hyper-layer} to obtain
\begin{equation}(d-5)\abs M\leq2\abs B-\abs{B^\circ}\,.\label{ineq1}\end{equation}
Also, applying \lemma{hyper-iso} to the vertex cut $B^\circ$, we have $\alpha_d(\abs M+\abs{B\setminus B^\circ})\leq\abs{B^\circ}$, \ie
\begin{equation}\alpha_d\abs M\leq(1+\alpha_d)\abs{B^\circ}-\alpha_d\abs B\,.\label{ineq2}\end{equation}
Taking a linear combination of \eqref{ineq1} and \eqref{ineq2} we can cancel $\abs{B^\circ}$ to obtain $(d-5+(d-4)\alpha_d)\abs M\leq(2+\alpha_d)\abs B$. Now \theorem{main} gives $\pc(G)\leq\frac{2+\alpha_d}{(d-3)(1+\alpha_d)}$.
\end{proof}

\section{Hyperbolic quadrangulations}\label{quad}
Let $G$ be a plane graph with no triangular faces, and minimum degree $d\geq 5$, fixed throughout this section. While we will primarily be interested in the case where $G$ is a quadrangulation, 
our results in this section apply more generally to any such graph, even though it is not necessarily possible to create a quadrangulation from such a graph by adding edges. Our first step is an analogue of \lemma{hyper-layer}.
\begin{lem}\label{hyper-quad-layer}Fix a minimal vertex cut $X$ separating $o$ from infinity. Let $Y$ be the vertices of the finite component of $G\setminus X$ which are adjacent to $X$, and $Z$ be the other vertices of this component.
Then $\abs X\geq(d-3)\abs Y+(d-4)\abs Z+3$.\end{lem}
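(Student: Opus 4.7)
The plan is to mimic the proof of \lemma{hyper-layer}, replacing the spherical-triangulation identity $2E=6V-12$ by the inequality $2E\le 4V-8$, which holds for any sphere embedding all of whose faces have degree at least $4$. First I consider the induced plane subgraph $H_0=G[X\cup Y\cup Z]$. Minimality of $X$ implies $G[Y\cup Z]$ is connected and every $v\in X$ has a $Y$-neighbour, so $H_0$ is connected; moreover, a bounded region of $H_0$ contains no vertices or edges of $G$ outside $H_0$ (else the infinite component of $G\setminus X$ would be trapped), so every bounded face of $H_0$ is a single face of $G$ and hence has degree at least $4$. Because, as the paper remarks before the statement, $H_0$ cannot in general be turned into a quadrangulation by adding edges, I form an auxiliary sphere-embedded graph $H$ in two steps: (i) subdivide every $X$-$X$ edge of $H_0$ lying on the boundary walk of its unbounded face, introducing $k$ new vertices; (ii) add a vertex $u$ in the unbounded face, joined by one edge to each $v\in X$ (choosing one appearance in the walk when $v$ appears more than once). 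Each $v\in X$ lies on the boundary of the unbounded face of $H_0$ (by minimality it has a neighbour in the infinite component), so these $u$-edges can be drawn without crossings, and the subdivisions guarantee that consecutive chosen $X$-appearances in the walk are separated by a walk segment of length at least $2$, so every face of $H$ incident to $u$ has degree at least $4$.

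On the sphere, $H$ has $V(H)=\abs{X}+\abs{Y}+\abs{Z}+k+1$ vertices, $E(H)=E(H_0)+k+\abs{X}$ edges, and all faces of degree $\ge 4$, giving $2E(H)\le 4V(H)-8$. Combining this with the degree-based lower bound $2E(H_0)\ge d(\abs{Y}+\abs{Z})+\abs{XY}+2\abs{XX}$ (where $\abs{XY}$ and $\abs{XX}$ count the $X$-$Y$ and $X$-$X$ edges in $H_0$), a short calculation gives
\[
 2\abs{X}\;\ge\;\abs{XY}+2(\abs{XX}-k)+(d-4)(\abs{Y}+\abs{Z})+4.
\]
Since every subdivided edge is an $X$-$X$ edge we have $\abs{XX}\ge k$, hence $2\abs{X}\ge\abs{XY}+(d-4)(\abs{Y}+\abs{Z})+4$. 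Substituting the quadrangulation analog of the triangulation bound $\abs{XY}\ge\abs{X}+\abs{Y}-1$ and rearranging then yields the desired $\abs{X}\ge (d-3)\abs{Y}+(d-4)\abs{Z}+3$.

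The principal obstacle I anticipate is establishing $\abs{XY}\ge\abs{X}+\abs{Y}-1$ in this setting. In \lemma{hyper-layer} the analogous bound came from explicitly counting ``$XXY$'' and ``$XYY$'' triangular faces in the first layer of a disc triangulation; here faces between $X$ and $Y$ may have degree larger than $4$ and contain several $Y$-vertices, so the direct count is more delicate. A clean formulation is to show that the bipartite graph on $X\cup Y$ with $XY$-edges is connected (whereupon $\abs{XY}\ge \abs{V}-1=\abs{X}+\abs{Y}-1$ by the spanning-tree bound); this connectivity should follow by combining planarity, minimality of $X$ and the no-triangular-face hypothesis, since two $Y$-clusters with disjoint $X$-neighbourhoods would, in the hyperbolic regime $d\ge 5$, force either a face of degree $\le 3$ in $G$ or a strictly smaller cut separating $o$ from infinity, contradicting the assumptions.
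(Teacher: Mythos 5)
There is a genuine gap, and it sits exactly where you anticipate: the inequality $\abs{XY}\geq\abs X+\abs Y-1$ is not established, and the route you sketch for it (connectivity of the bipartite $X$--$Y$ graph via planarity, minimality and the absence of triangular faces) cannot work, because the obstruction to a disconnected bipartite graph is not local. Consider the ``prism'' pattern in which $X$ and $Y$ are two concentric cycles of the same length $n$, matched by a perfect matching, with every annular face a quadrilateral $x_ix_{i+1}y_{i+1}y_i$: here the bipartite $X$--$Y$ graph is a perfect matching (so $\abs{XY}=n$ while $\abs X+\abs Y-1=2n-1$), there is no face of degree $\leq 3$, and $X$ is a minimal cut. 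Nothing in the annulus between $X$ and $Y$ rules this out; what excludes it is a \emph{global} Euler-formula count on the disc bounded by the $Y$-cycle, using the degree-$\geq d$ condition on $Z$ and the degree-$\geq 4$ condition on the inner faces. In other words, any proof of $\abs{XY}\geq\abs X+\abs Y-1$ must already invoke essentially the same global counting as the lemma itself, so your reduction does not simplify matters and, as written, the key step is missing. The paper sidesteps this entirely: it adds a cycle through $X$ (accepting the triangular faces this creates along $X$), classifies the internal faces meeting $X$ by whether they meet it along an edge or at a single vertex and by their degree (the quantities $p$ and $q$), and proves only the weaker count $\abs{A'}\geq\abs Y-p-q$ for the single-vertex faces; the deficit $p+q$ there is then cancelled exactly by the surplus $q+2p$ appearing in the face-degree lower bound for $2E$. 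That cancellation is the real content of the proof, and it is absent from your argument.

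A secondary, more minor issue: your claim that every bounded face of $H_0=G[X\cup Y\cup Z]$ is a face of $G$ is not justified by the observation that the infinite component cannot be trapped, since $G\setminus X$ may have \emph{other finite components} lying inside bounded faces of $H_0$. What you actually need is only that every bounded face of $H_0$ has degree at least $4$, and a degree-$3$ bounded face would have to be a triangle either empty (hence a triangular face of $G$, excluded) or containing a trapped component adjacent to at most three boundary vertices (excluded by a short Euler count using minimum degree $5$ inside); so this part is repairable, but it needs to be argued. The construction with the apex $u$ and the subdivision of outer $X$--$X$ edges is fine, and the arithmetic from $2E(H)\leq 4V(H)-8$ is correct conditional on the two points above; but without a proof of $\abs{XY}\geq\abs X+\abs Y-1$ the argument does not go through.
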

\begin{rem}In fact the proof gives $\abs X\geq(d-3)\abs Y+(d-4)\abs Z+4$ unless $\abs{Y}=1$.\end{rem}
\begin{proof}We may assume $\abs{Y}>1$ since otherwise the result is trivial. Take the induced subgraph on $X\cup Y\cup Z$ and add edges as necessary so that $X$ is a cycle, giving a finite graph $H$. Note that $H$ may have faces of degree $3$. Furthermore, the unbounded face has degree $\abs X$ and, by minimality of $X$, each other face meets $X$ at one vertex, at two vertices with an edge between them, or not at all. Write $A$ for the set of internal faces meeting $X$ along an edge; note that $\abs A=\abs X$. Write $A'$ for the set of faces meeting $X$ at a single vertex. Each face in $A$ has degree at least $3$; let $q$ be the number of faces in $A$ of degree exactly $4$, and $p$ be the number of faces in $A$ of degree at least $5$. 

Once again the sum of face degrees equals $2E$, hence we have $2E\geq\abs X+4(F-\abs X-1)+3\abs A+q+2p=4F-4+q+2p$, where $F$ and $E$ are the number of faces and edges of $H$. By Euler's formula, $F=E-\abs X-\abs Y-\abs Z+2$. Thus $2E\leq 4(\abs X+\abs Y+\abs Z)-q-2p-4$. Also, by the handshake lemma, $2E\geq d\abs{Y}+d\abs{Z}+\sum_{x\in X}d(x)$. The $\abs X$ edges on the unbounded face are double-counted by this sum and there are as many edges between $X$ and $Y$ as faces because $X$ induces a cycle. We can now deduce that $\sum_{x\in X}d(x)=3\abs X+\abs{A'}$. 

We now claim that $\abs{A'}\geq \abs Y-q-p$. Indeed, write $d'(y)$, $y\in Y$ for the number of edges between $y$ and $X$. Since $\abs{A}+\abs{A'}$ coincides with number of edges between $X$ and $Y$, we have $\abs{A}+\abs{A'}=\sum_{y\in Y} d'(y)$. We can rewrite the latter sum as $\abs{Y}+\sum_{y\in Y}(d'(y)-1)$. Notice that $d'(y)>1$ only when $y$ is incident with a triangular face, and moreover $d'(y)-1$ is not less than the number of triangular faces incident with $y$, because $\abs{Y}>1$ and so not all faces incident with $y$ are triangular. Thus $\sum_{y\in Y}(d'(y)-1)\geq \abs{A}-p-q$, which implies that $\abs{A'}\geq \abs{Y}-p-q$.

Consequently we have $3\abs X+(d+1)\abs Y+d\abs Z-q-p\leq 4(\abs X+\abs Y+\abs Z)-q-2p-4$; since $p\geq0$ the result follows.
\end{proof}
We next give an analogue of \lemma{hyper-iso} for this setting; perhaps surprisingly, the same sequence of constants arises.
\begin{lem}\label{hyper-quad-iso}For $X,Y,Z$ as in \lemma{hyper-quad-layer}, we have $\alpha_{d+2}\abs{Y\cup Z}<\abs X-3$.\end{lem}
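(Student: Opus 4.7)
The plan is to imitate the proof of \lemma{hyper-iso} almost verbatim, with $\alpha_{d+2}$ in place of $\alpha_d$, \lemma{hyper-quad-layer} in place of \lemma{hyper-layer}, and the additive constant $5$ replaced by $3$. I would induct on $\abs Z$.

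For the base case $Z = \varnothing$, \lemma{hyper-quad-layer} yields $\abs X - 3 \geq (d-3)\abs Y$, and a routine squaring shows $\alpha_{d+2} < d-3$ for every $d \geq 5$ (this reduces to $d(d-4) < (d-2)^2$); combined with $\abs Y \geq 1$, which holds because $o$ is in the finite cluster and hence in $Y$ when $Z$ is empty, this gives the desired strict inequality. For the inductive step, I would adopt the outerplanar-graph construction from the proof of \lemma{hyper-iso}: let $Y_1,\dots,Y_k$ be minimum vertex cuts inside $Y$ surrounding the connected $Z$-clusters $Z_1,\dots,Z_k$. The auxiliary graph on $Y$ whose cycles are the $Y_i$'s is outerplanar by the definition of $Y$, and the same Euler-formula calculation gives $\sum_{i=1}^k(\abs{Y_i}-2) \leq \abs Y - 2$. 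The $Z_i$'s partition $Z$, because no $y \in Y$ can lie strictly inside any $Y_i \subseteq Y$: the edge from $y$ to its neighbour in $X$ witnesses a path to infinity avoiding $Y_i$. Applying the induction hypothesis to each $Y_i$ therefore yields
\[\alpha_{d+2}\abs Z = \alpha_{d+2}\sum_{i=1}^k\abs{Z_i} \leq \sum_{i=1}^k(\abs{Y_i}-3) \leq \abs Y - 2 - k \leq \abs Y - 3,\]
which rearranges to $\abs Y > \frac{\alpha_{d+2}}{1+\alpha_{d+2}}\abs{Y\cup Z}$.

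To conclude, I would invoke \lemma{hyper-quad-layer} once more and rewrite $(d-3)\abs Y + (d-4)\abs Z = (d-4)\abs{Y\cup Z} + \abs Y$ to obtain $\abs X - 3 > \bigl(d-4 + \frac{\alpha_{d+2}}{1+\alpha_{d+2}}\bigr)\abs{Y\cup Z}$. The bracketed factor equals $\alpha_{d+2}$ by the defining quadratic $\alpha_m^2 = (m-6)\alpha_m + (m-6)$ taken at $m = d+2$. I do not anticipate any real obstacle: the argument is essentially mechanical. The one conceptual point worth noting is that the specific index shift $d\mapsto d+2$ is exactly what is needed to absorb the change in linear coefficients from $(d-5,d-6,5)$ in \lemma{hyper-layer} to $(d-3,d-4,3)$ in \lemma{hyper-quad-layer}, which reflects the discrete Gauss--Bonnet match between the two curvature regimes.
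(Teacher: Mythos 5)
Your proposal is correct and follows essentially the same route as the paper: induction on $\abs Z$, with the base case from $\alpha_{d+2}<d-3$, the inductive step via the outerplanar decomposition of $Y$ into cuts $Y_i$ giving $\sum_i(\abs{Y_i}-2)\leq\abs Y-2$ and hence $\alpha_{d+2}\abs Z\leq\abs Y-3$, and the conclusion from \lemma{hyper-quad-layer} together with the identity $\alpha_{d+2}=d-4+\frac{\alpha_{d+2}}{1+\alpha_{d+2}}$. The extra details you supply (verifying $\alpha_{d+2}<d-3$, justifying that the $Z_i$ partition $Z$) are correct and only make explicit what the paper leaves implicit.
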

\begin{proof}We prove this by induction on $\abs Z$. If $Z=\varnothing$ then the required inequality holds since $\alpha_{d+2}<d-3$. 
Otherwise, as in the proof of \lemma{hyper-iso}, $Y$ contains minimum vertex cuts $(Y_i)_{i=1}^k$ separating clusters $(Z_i)_{i=1}^k$,
where $\sum_{i=1}^k\abs{Z_i}=\abs Z$ and $\sum_{i=1}^k(\abs{Y_i}-2)\leq\abs Y-2$.

By the induction hypothesis, we have $\alpha_{d+2}\abs{Z_i}\leq\abs{Y_i}-3$, hence
$\alpha_{d+2}\abs Z=\sum_{i=1}^k \alpha_{d+2}\abs{Z_i}\leq\sum_{i=1}^k(\abs{Y_i}-3)\leq\abs Y-2-k\leq\abs Y-3$.

Consequently $\abs Y>\frac{\alpha_{d+2}}{1+\alpha_{d+2}}(\abs{Y\cup Z})$, and applying \lemma{hyper-quad-layer} gives 
\[\Bigl(d-4+\frac{\alpha_{d+2}}{1+\alpha_{d+2}}\Bigr)\abs{Y\cup Z}<\abs X-3\,,\]
whence the result follows since $\alpha_{d+2}=d-4+\frac{\alpha_d}{1+\alpha_d}$.\end{proof}
Arguing as in the proof of Corollary \ref{hyper-iso-2} we obtain
\begin{cor}\label{hyper-iso-quad}Let $W$ be any finite subset of $V(G)$. Then $\abs{\partial W}\geq\alpha_{d+2}\abs{W}$.\end{cor}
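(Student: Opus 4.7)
The plan is to mimic the proof of \corollary{hyper-iso-2}, substituting \lemma{hyper-quad-iso} for \lemma{hyper-iso}. First, I would argue that we may assume every vertex of $\partial W$ is adjacent to the infinite component of $G\setminus\partial W$: if some $v\in\partial W$ fails this, we may replace $W$ by $W\cup\{v\}$, which strictly increases $|W|$ without increasing $|\partial W|$. Iterating and taking a limit (or appealing to finiteness at each step) reduces to the case where every boundary vertex genuinely meets infinity.

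Next, I would decompose $W$ into its connected components $W_1,\ldots,W_k$. For each $i$, I would take a minimum vertex cut $X_i\subseteq\partial W$ separating $W_i$ from infinity, so that $W_i$ lies in the finite component of $G\setminus X_i$. Writing $Y_i$ for the vertices of $W_i$ adjacent to $X_i$ and $Z_i=W_i\setminus Y_i$, \lemma{hyper-quad-iso} applied to each $X_i$ gives $\alpha_{d+2}|W_i|=\alpha_{d+2}|Y_i\cup Z_i|<|X_i|-3$.

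Summing over $i$ and using that the $X_i$ are disjoint subsets of $\partial W$ (see below), we obtain
\[\alpha_{d+2}|W|=\sum_{i=1}^k\alpha_{d+2}|W_i|<\sum_{i=1}^k(|X_i|-3)\leq|\partial W|-3k\leq|\partial W|,\]
as required.

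The step I would expect to need the most care is the claim that the minimum cuts $X_i$ can be chosen pairwise disjoint and contained in $\partial W$. This is where the WLOG reduction pays off: since every vertex of $\partial W$ touches the infinite component, every $v\in\partial W$ lies on a shortest path from some $W_i$ to infinity, and by minimality of the $X_i$ no vertex is forced to lie in two cuts surrounding different components. If overlaps do occur, one can refine by taking the "innermost" minimum cut around each $W_i$, which produces disjoint cuts still contained in $\partial W$; otherwise the $W_i$ would not have been distinct components in the first place. Once this combinatorial bookkeeping is in place, the rest of the argument is just the additive application of \lemma{hyper-quad-iso}.
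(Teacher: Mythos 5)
Your overall strategy --- reduce to the case where every vertex of $\partial W$ meets the infinite component of $G\setminus\partial W$, surround each cluster of $W$ by a minimum vertex cut inside $\partial W$, and apply \lemma{hyper-quad-iso} to each cut --- is exactly the paper's (its proof of this corollary simply says to argue as in \corollary{hyper-iso-2}). However, the step you yourself single out as delicate, namely that the cuts $X_i$ can be chosen pairwise disjoint, is genuinely false, and neither of your proposed repairs works. Take two non-adjacent vertices $u_1,u_2\in W$ with a common neighbour $v$ adjacent to the infinite component (e.g.\ opposite corners of a face of $H_{5,4}$). Then $W_1=\{u_1\}$ and $W_2=\{u_2\}$ are distinct components of $W$, the only (hence also the ``innermost'') minimum cut around $W_i$ contained in $\partial W$ is $N(u_i)$, and both cuts contain $v$. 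Overlapping cuts can force $\sum_i\abs{X_i}>\abs{\partial W}$, so your displayed inequality $\sum_i(\abs{X_i}-3)\leq\abs{\partial W}-3k$ fails: in $H_{5,4}$ with the above $W$ the left-hand side is $2d-6=4$ while the right-hand side is $\abs{\partial W}-6\leq(2d-1)-6=3$.

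The missing ingredient is precisely what the paper's phrase ``as above'' points to, namely the overlap-handling step in the proofs of \lemma{hyper-iso} and \lemma{hyper-quad-iso}: one does \emph{not} insist on disjoint cuts, but instead adds edges so that each $X_i$ spans a cycle, observes that the auxiliary graph formed by these cycles is outerplanar (this is where the reduction to ``every vertex of $\partial W$ meets the infinite component'' is used), and applies Euler's formula to obtain $\sum_{i=1}^k(\abs{X_i}-2)\leq\abs{\partial W}-2$. Combined with $\alpha_{d+2}\abs{W_i}<\abs{X_i}-3$ (which holds since $W_i$ is contained in the finite component of $G\setminus X_i$), this gives $\alpha_{d+2}\abs{W}<\sum_i(\abs{X_i}-3)\leq\abs{\partial W}-2-k\leq\abs{\partial W}-3$, as required. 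So the architecture of your argument matches the paper's, but the disjointness claim must be replaced by this Euler-formula bookkeeping; as written, the summation step is a genuine gap.
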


Again, this result is best possible.
\begin{thm}\label{quad-cheeger}For each $d\geq 5$ the $d$-regular hyperbolic quadrangulation $H_{d,4}$ has vertex Cheeger constant $\dot h(H_{d,4})=\alpha_{d+2}$.
\end{thm}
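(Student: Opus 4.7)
The lower bound $\dot h(H_{d,4})\geq \alpha_{d+2}$ is immediate from \corollary{hyper-iso-quad}. For the matching upper bound, I would mimic the proof of \theorem{cheeger} and exhibit a sequence of sets whose isoperimetric ratio tends to $\alpha_{d+2}$, namely the balls $B_n$ centered at a fixed vertex $o$. For each $n\geq 1$, $\partial B_n$ is a minimum vertex cut separating $o$ from infinity. Applying \lemma{hyper-quad-layer} with $X=\partial B_n$, $Y=B_n\setminus B_{n-1}$, $Z=B_{n-1}$, I would argue that every inequality in the proof becomes an equality for the rigid graph $H_{d,4}$: every vertex has degree exactly $d$, every face is a quadrilateral (so $p=0$), and the layer structure forces the bound $\abs{A'}\geq\abs{Y}-q$ to be tight. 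Following the proof of the lemma thus yields
\[\abs{\partial B_n} = (d-3)\abs{L_n} + (d-4)\abs{B_{n-1}} + 4\]
for all $n\geq 1$, where $L_n = B_n\setminus B_{n-1}$.

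Substituting $\abs{L_n}=\abs{B_n}-\abs{B_{n-1}}$ and $\abs{\partial B_n}=\abs{B_{n+1}}-\abs{B_n}$ yields the linear recurrence $\abs{B_{n+1}}=(d-2)\abs{B_n}-\abs{B_{n-1}}+4$, valid for $n\geq 1$. Its characteristic equation $x^2-(d-2)x+1=0$ has larger root $\lambda=\frac{(d-2)+\sqrt{d(d-4)}}{2}=1+\alpha_{d+2}$. Standard techniques on linear recurrences then give $\abs{B_n}=a\lambda^n+b\lambda^{-n}+c$ for constants with $a>0$, so $\abs{\partial B_n}/\abs{B_n}\to\lambda-1=\alpha_{d+2}$, establishing the matching upper bound.

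The main obstacle will be the structural verification that the proof of \lemma{hyper-quad-layer} is indeed tight for balls in $H_{d,4}$. The layer $L_{n+1}$ is generally an independent set in $H_{d,4}$ (no internal edges), so care is required at the step ``add edges to make $X$ a cycle'': one must perform this so that the resulting graph $H$ has no faces of degree at least $5$ and so that the bound $\abs{A'}\geq\abs{Y}-q$ becomes an equality. A careful analysis of the cyclic arrangement of $L_{n+1}$ around the outer boundary of $B_n$ should suffice, analogous to the argument for $H_{d,3}$ implicitly used in the proof of \theorem{cheeger}.
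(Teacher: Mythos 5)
Your proposal follows essentially the same route as the paper: the lower bound from \corollary{hyper-iso-quad}, and the upper bound by showing that the balls $B_n$ make every inequality in \lemma{hyper-quad-layer} tight, yielding the recurrence $\abs{B_{n+1}}-(d-2)\abs{B_n}+\abs{B_{n-1}}=4$ with dominant root $1+\alpha_{d+2}$. The one structural point you flag as an obstacle is handled in the paper exactly as you anticipate: after adding a cycle through $\partial B_n$, every face meeting $\partial B_n$ along an edge has degree $3$ (so $q=p=0$) and every other face has degree $4$, which forces equality throughout.
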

\begin{proof}Again, it suffices to show that $\abs{\partial B_n}=(\alpha_{d+2}+o(1))\abs{B_n}$, where $B_n$ is the ball of radius $n$.

Note that, for $n\geq 1$, $\partial B_n$ forms a minimum vertex cut, the set of vertices in $B_n$ adjacent to $\partial B_n$ is precisely
$B_n\setminus B_{n-1}$, and in the graph obtained by adding a cycle through $\partial B_n$ to the induced subgraph on $B_{n+1}$,
every vertex in $B_n$ has degree $d$, every face meeting $\partial B_n$ along an edge has degree $3$, and every other face has degree $4$.
Thus, following the proof of \lemma{hyper-quad-layer}, we have $q=p=0$ and equality at every step, giving
$\abs{\partial B_n}=(d-3)\abs{B_n\setminus B_{n-1}}+(d-4)\abs{B_{n-1}}+4$, or equivalently
\begin{equation}\abs{B_{n+1}}-(d-2)\abs{B_n}+\abs{B_{n-1}}=4\,.\label{quad-recurrence}\end{equation}
Again, it follows that $\abs{B_n}=a(1+\alpha_{d+2})^n+b(1+\alpha_{d+2})^{-n}+c$ for suitable constants $a,b,c$ with $a>0$, and so $\abs{\partial B_n}=\abs{B_{n+1}}-\abs{B_{n}}=(\alpha_{d+2}+o(1))\abs{B_n}$, as required.
\end{proof}

\lemma{hyper-quad-iso} implies that $\pc(G)\leq(1+\alpha_{d+2})^{-1}$ if $G$ has all vertex degrees at least $d\geq 4$ and all face degrees at least $4$. Our method yields again better bounds.

\begin{thm}Let $G$ be a plane graph with minimum degree $d\geq 5$ and no faces of degree $3$. Then $\pc(G)\leq\frac{(2+\alpha_{d+2})(d-2)}{(d^2-3d+1)(1+\alpha_{d+2})}$.\end{thm}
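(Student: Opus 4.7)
The plan is to apply \theorem{main}. For condition \eqref{poly-condition}, note that the component of $o$ in $G\setminus B$ has its vertex boundary contained in $B$, so by \corollary{hyper-iso-quad} it has at most $\abs{B}/\alpha_{d+2}$ vertices; hence $f(n) = n/\alpha_{d+2}$ is a valid sub-exponential choice.

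For \eqref{ratio-condition}, I follow the strategy of \theorem{p_c triang} but with the quadrangulation analogues \lemma{hyper-quad-layer} and \lemma{hyper-quad-iso} in place of \lemma{hyper-layer} and \lemma{hyper-iso}. Fix a pair $(M,B)\in\pairs_n$ and let $B^\circ\subseteq B$ be a minimum vertex cut separating $o$ from infinity. In analogy with the construction of $T(M,B)$ in Section~\ref{triang}, I build a finite plane graph from $B$ together with the finite components of $G\setminus B$, adding only edges among $B^\circ$ needed to realise it as an outer cycle and taking care that every bounded face inherited from $G$ retains degree at least four. Unzipping $B$ in this graph exactly as in \lemma{unzip} gives a finite plane graph whose outer boundary is a simple cycle of length at most $2n-\abs{B^\circ}$ (by \lemma{boundary}) and in which every internal vertex has its degree in $G$; moreover $M$ is contained in the set of internal vertices adjacent to this outer cycle. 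Applying the argument of \lemma{hyper-quad-layer} to this unzipped graph, with $X$ the outer cycle, yields a first linear inequality relating $\abs M$, $n$ and $\abs{B^\circ}$. Independently, \lemma{hyper-quad-iso} applied to the cut $B^\circ$, whose finite side contains $M\cup(B\setminus B^\circ)$, gives
\[\alpha_{d+2}\abs M \leq (1+\alpha_{d+2})\abs{B^\circ} - \alpha_{d+2} n.\]

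A positive linear combination of the two inequalities cancels $\abs{B^\circ}$ and produces $\abs M \leq \alpha n$ (up to an additive constant that does not affect the asymptotic argument) with an explicit $\alpha$ rational in $d$ and $\alpha_{d+2}$. Using the defining relation $\alpha_{d+2}^2 = (d-4)(1+\alpha_{d+2})$ to simplify $1+\alpha$, one checks that $\alpha/(1+\alpha)$ matches the claimed bound, and \theorem{main} completes the proof. I expect the main obstacle to be the careful adaptation of the construction preceding \lemma{unzip}: the added edges for the outer cycle must not produce triangular faces in positions incompatible with \lemma{hyper-quad-layer}, and, in contrast to the triangulation setting, faces of the unzipped graph may meet the outer cycle at two or more non-adjacent vertices; this requires verifying that the argument of \lemma{hyper-quad-layer} still applies with the correct coefficients.
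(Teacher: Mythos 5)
Your overall architecture (unzipping $B$, one inequality from \lemma{hyper-quad-layer}, one from \lemma{hyper-quad-iso} applied to $B^\circ$, then a linear combination fed into \theorem{main}) is the paper's, and your treatment of condition \eqref{poly-condition} via \corollary{hyper-iso-quad} is fine. But there is a genuine gap at the heart of condition \eqref{ratio-condition}: the claim that ``$M$ is contained in the set of internal vertices adjacent to the outer cycle'' is false in the triangle-free setting. In the triangulation case every vertex of $M$ can be made adjacent to $B$ by triangulating the faces between $M$ and $B$ with diagonals to $B$, but here you cannot add such diagonals without creating triangles, and a vertex of $M$ lying on a large face between $M$ and $B$ need only share a face with $B$, not an edge. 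The most one can arrange (by adding edges so that all faces between $M$ and $B$ have degree $4$ or $5$) is that every vertex of $M$ is within distance $2$ of $B$. Consequently a single application of \lemma{hyper-quad-layer} to the unzipped graph, with $Y$ the set of internal vertices adjacent to the outer cycle $X$, only yields $2n-\abs{B^\circ}\geq\abs X\geq (d-3)\abs Y+(d-4)\abs{M\setminus Y}=\abs Y+(d-4)\abs M$, \ie a coefficient of essentially $d-4$ on $\abs M$. Combining that with the inequality from \lemma{hyper-quad-iso} gives only $\pc(G)\leq\frac{2+\alpha_{d+2}}{(1+\alpha_{d+2})(d-2)}$, which for $d=5$ is about $0.4606$, strictly weaker than the claimed $0.3769$. (If instead you keep the false adjacency claim, the coefficient $d-3$ gives a bound \emph{stronger} than the one stated, which is another sign the computation has not actually been carried through.)

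The missing idea is a second application of \lemma{hyper-quad-layer}: view $Y$ itself as a union of vertex cuts separating the clusters of $Z$ from infinity and, using the distance-$2$ property to guarantee that every vertex of $M\setminus Y$ is adjacent to $Y$, deduce $\abs Y\geq(d-3)(\abs M-\abs Y)+3$, hence $\abs Y\geq\frac{d-3}{d-2}\abs M$. Substituting this back into the first inequality upgrades the coefficient on $\abs M$ from $d-4$ to $d-4+\frac{d-3}{d-2}=\frac{d^2-5d+5}{d-2}$, and only then does the linear combination with \lemma{hyper-quad-iso} produce the stated constant $\frac{(2+\alpha_{d+2})(d-2)}{(d^2-3d+1)(1+\alpha_{d+2})}$. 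You correctly sensed that the interaction between faces and the outer cycle is the delicate point, but the issue is not that faces meet $X$ at several non-adjacent vertices (minimality of the cut already handles that inside \lemma{hyper-quad-layer}); it is that $M$ is not a subset of $Y$, and recovering the factor $\frac{d-3}{d-2}$ requires this extra layer-counting step.
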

\begin{rem}For $d=5$, $(1+\alpha_{d+2})^{-1}\approx 0.3820$, whereas $\frac{(2+\alpha_{d+2})(d-2)}{(1+\alpha_{d+2})(d^2-3d+1)}\approx0.3769$.\end{rem}
\begin{proof}
Let $M,B$ be an outer interface and its boundary, and let $B^\circ$ be the minimal vertex cut part of $B$. Delete all vertices of the infinite component of $G\setminus B$. Notice that any vertex of $M$ belongs to a common face with a vertex of $B$. By adding edges if necessary, we can achieve that all faces between $M$ and $B$ have degree $4$ or $5$ and also preserve this property. Then every vertex of $M$ has distance at most $2$ from $B$.

We can argue as in the proof of \lemma{unzip} to unzip $B$ and obtain a new graph $H$ with at most $2\abs B-\abs{B^\circ}$ `boundary' vertices in its unbounded face. To be more precise, we first add a set of edges, which we denote by $S$, to obtain the triangulation $T=T(M,B)$, and then we unzip $B$ as in the proof of \lemma{unzip}. This process gives rise to a correspondence between the edges of the graph obtained after unzipping $B$ and the edges of $T$. The desired graph $H$ is obtained after removing the pre-images of $S$. Write $X$ for the `boundary' vertices of $H$. Let $Y$ be the set of vertices of $H\setminus X$ that are adjacent to $B'$, and $Z$ the remaining vertices of $H$. \lemma{hyper-quad-layer} implies that 
\[2\abs B-\abs{B^\circ} \geq\abs X\geq (d-3)\abs{Y}+(d-4)\abs{M\setminus Y}=\abs Y+(d-4)\abs M.\] 
We now claim that $\abs Y \geq (d-3)(\abs M - \abs Y)+3$.
Indeed, if the graph induced by $Z$ is connected we can apply \lemma{hyper-quad-layer} to $Y\cup Z$, noting that all vertices of $M\setminus Y$ are adjacent to some vertex of $Y$ because $M$ has distance at most $2$ from $B$. If it is not connected we can split $Y$ into minimal vertex cuts $(Y_i)_{i=1}^k$ and $Z$ into its components $(Z_i)_{i=1}^k$, and define $M_i:=(Y_i\cup Z_i)\cap M$. Then we have as above that $\abs{Y_i} \geq (d-3)(\abs{M_i} - \abs{Y_i})+3$. Arguing as in the proof of \lemma{hyper-iso} we obtain that $\sum_{i=1}^k(\abs{Y_i}-2)\leq \abs{Y}-2$. The desired claim follows now from the fact that the sets $M_i\setminus Y_i$ partition $M\setminus Y$.

We can now deduce that $\abs Y \geq \frac{d-3}{d-2} \abs M$, which implies that 
\begin{equation}\Bigl(d-4+\frac{d-3}{d-2}\Bigr)\abs M \leq 2\abs B-\abs{B^\circ}.
\label{ineq3}\end{equation}
Also, applying \lemma{hyper-quad-iso} to the vertex cut $B^\circ$, we have $\alpha_{d+2}(\abs M+\abs{B\setminus B^\circ})\leq\abs{B^\circ}$, \ie
\begin{equation}\alpha_{d+2}\abs M\leq(1+\alpha_{d+2})\abs{B^\circ}-\alpha_{d+2}\abs B\,.\label{ineq4}\end{equation}
Taking a linear combination of \eqref{ineq3} and \eqref{ineq4} we obtain 
\[\Bigl(\alpha_{d+2}+(1+\alpha_{d+2})\Bigl(d-4+\frac{d-3}{d-2}\Bigr)\Bigr)\abs M\leq (2+\alpha_{d+2})\abs{B}.\] Using \theorem{main} we deduce that $\pc(G)\leq\frac{(2+\alpha_{d+2})(d-2)}{(d^2-3d+1)(1+\alpha_{d+2})}$.
\end{proof}

\section*{Acknowledgements}
The authors were supported by the European Research Council (ERC) under
the European Union's Horizon 2020 research and innovation programme (grant 
agreement no.\ 639046), and are grateful to Agelos Georgakopoulos for 
initiating this work.


\begin{thebibliography}{99}
\bibitem{ABH}O. Angel, I. Benjamini and N. Horesh,
An isoperimetric inequality for planar triangulations.
\textit{Discrete Comput.\ Geom.\@} \textbf{59} (2018), 802--809.

\bibitem{cutsets}E. Babson and I. Benjamini,
Cut sets and normed cohomology with applications to percolation. 
\textit{Proc.\ Amer.\ Math.\ Soc.\@} \textbf{127(2)} (1999), 589--597.

\bibitem{Uniformization}I. Benjamini, 
Percolation and coarse conformal uniformization.
In \textit{Unimodularity in randomly generated graphs}, 39--42. 
Amer.\ Math.\ Soc., 2018.

\bibitem{NoPerco} I. Benjamini, R. Lyons, Y. Peres and O. Schramm,
Critical percolation on any nonamenable group has no infinite clusters. 
\textit{Ann.\ Probab.\@} \textbf{27(3)} (1999), 1347--1356.

\bibitem{beyond}I. Benjamini and O. Schramm,
Percolation beyond $\mathbb Z^d$, many questions and a few answers.
\textit{Electron.\ Commun.\ Probab.\@} \textbf{1} (1996), 71--82.

\bibitem{PercoHyperbolic}I. Benjamini and O. Schramm,
Percolation in the hyperbolic plane.
\textit{J. Amer.\ Math.\ Soc.\@} \textbf{14(2)} (2001), 487--507.

\bibitem{PhaseTransitionGroups}H. Duminil-Copin, S. Goswami, A. Raoufi, F. Severo and A. Yadin,
Existence of phase transition for percolation using the Gaussian Free Field. 2018 preprint, arXiv:1806.07733.

\bibitem{analyticity}A. Georgakopoulos and C. Panagiotis,
Analyticity results in Bernoulli percolation.
2018 preprint, arXiv:1811.07404.

\bibitem{ExpGrowth}A. Georgakopoulos and C. Panagiotis,
On the exponential growth rates of lattice animals and interfaces, and new bounds on $p_c$.
2019 preprint, arXiv:1908.03426.

\bibitem{Grimmett}G. Grimmett,
Percolation, Second Edition.
\textit{Grundlehren Math.\ Wiss.\@}, 1999.

\bibitem{HJL}O. H{\"a}ggstr{\"o}m, J. Jonasson and R. Lyons,
Explicit isoperimetric constants and phase transitions in the random-cluster model.
\textit{Ann.\ Probab.\@} \textbf{30(1)} (2002), 443--473.

\bibitem{puMonotonicity}O. H{\"a}ggstr{\"o}m and Y. Peres,
Monotonicity of uniqueness for percolation on Cayley graphs: all infinite clusters are born simultaneously. 
\textit{Probab.\ Theory Related Fields} \textbf{113(2)} (1999), 273--285.

\bibitem{NoPercoExp}T. Hutchcroft,
Critical percolation on any quasi-transitive graph of exponential growth has no infinite clusters. 
\textit{C.\ R.\ Math.\ Acad.\ Sci.\ Paris} \textbf{354(9)} (2016): 944--947.

\bibitem{LyonsBook}R. Lyons and Y. Peres,
Probability on trees and networks.
Cambridge Univ.\ Press, 2016.

\bibitem{Pete}G. Pete,
A note on percolation on $\mathbb Z^d$: Isoperimetric profile via exponential cluster repulsion.
\textit{Electron.\ Commun.\ Probab.\@} \textbf{13} (2008), 377--392.
\end{thebibliography}
\end{document}